\documentclass[12pt]{article}
\usepackage [dvips]{graphics}
\usepackage[centertags]{amsmath}
\usepackage{amsfonts}
\usepackage{amssymb}
\usepackage{amsthm}
\usepackage{newlfont}
\usepackage{stmaryrd}
\usepackage[]{titletoc}
\usepackage{lipsum}
\usepackage{soul}
\usepackage{txfonts}
\usepackage{mathtools}
\usepackage{mathrsfs}
\usepackage{extarrows}
\usepackage{geometry}
\geometry{
	a4paper,
	total={165mm,240mm},
	left=25mm,
	top=25mm,
}
% Fuzz -------------------------------------------------------------------
\hfuzz2pt % Don't bother to report over-full boxes if over-edge is < 2pt
% Line spacing -----------------------------------------------------------
\newlength{\defbaselineskip}
\setlength{\defbaselineskip}{\baselineskip}
\newcommand{\setlinespacing}[1]%
{\setlength{\baselineskip}{#1 \defbaselineskip}}

% THEOREMS ---------------------------------------------------------------
\theoremstyle{plain}
\newtheorem{thm}{Theorem}[section]
\newtheorem{defn}[thm]{Definition}

\newtheorem{lem}[thm]{Lemma}

\newtheorem{rem}[thm]{Remark}

\newcommand{\bn}{\mathbb{B}_n}
\newcommand{\cn}{\mathbb{C}^n}

\newcommand{\clb}{\overline{\mathbb{B}_n}}

\newcommand{\Xzw}{|X(z,w)|}

\newcommand{\D}{\mathbb{D}}
\newcommand{\C}{\mathbb{C}}
\newcommand{\dw}{\delta(w)}
\newcommand{\intd}{\mathrm{d}}

\newcommand{\bfe}{\mathbf{e}}
\newcommand{\Hol}{\mathrm{Hol}}
\newcommand{\bpartial}{\bar{\partial}}
\newcommand{\PP}{\mathcal{P}}

\newcommand\blfootnote[1]{%
	\begingroup
	\renewcommand\thefootnote{}\footnote{#1}%
	\addtocounter{footnote}{-1}%
	\endgroup
}

\allowdisplaybreaks[4]

\makeatletter\@addtoreset{equation}{section} \makeatother
\setcounter{page}{1}
\begin{document}
	\title{On the $p$-essential normality of principal submodules of the Bergman module on strongly pseudoconvex domains}
	\author{Ronald G. Douglas, Kunyu Guo, Yi Wang}
	\date{}
	\maketitle
	\blfootnote{
		2010 Mathematics Subject Classification. 47A13, 32A50, 32D15, 47B35
		
		Key words and phrases. Arveson-Douglas Conjecture, Complex harmonic analysis, Bergman spaces, strongly pseudoconvex domains
		
	}
	\begin{abstract}
		In this paper, we show that under a mild condition, a principal submodule of the Bergman module on a bounded strongly pseudoconvex domain with smooth boundary in $\cn$ is $p$-essentially normal for all $p>n$. This improves a previous result by the first author and K. Wang, in which it was shown that any polynomial-generated principal submodule of the Bergman module on the unit ball $\bn$ is $p$-essentially normal for all $p>n$. As a consequence, we show that the submodule of $L_a^2(\bn)$ consisting of functions vanishing on an analytic subset of pure codimension $1$ is $p$-essentially normal for all $p>n$.
	\end{abstract}
	
	\section{Introduction}
	Let $\C[z_1,\ldots,z_n]$ be the ring of analytic polynomials of $n$ variables. For a Hilbert space $\mathcal{H}$, a homomorphism
	$$
	\Phi:~\C[z_1,\ldots,z_n]\to\mathcal{B}(\mathcal{H})
	$$
	defines a $\C[z_1,\ldots,z_n]$-module structure on $\mathcal{H}$. In this case we say $\mathcal{H}$ is a Hilbert module (over $\C[z_1,\ldots,z_n]$). A closed subspace $\PP\subseteq\mathcal{H}$ invariant under the module actions is called a submodule. It naturally inherits a Hilbert module structure by restriction:
	\[
	\Phi':\C[z_1,\ldots,z_n]\to\mathcal{B}(\mathcal{H}),\quad p\mapsto\Phi(p)|_{\PP}.
	\]
	 The quotient space $\mathcal{Q}=\mathcal{H}\ominus\PP\cong\mathcal{H}/\PP$, inherits a module structure by compression:
	 \[
	 \Phi'':\C[z_1,\ldots,z_n]\to\mathcal{B}(\mathcal{Q}),\quad p\mapsto Q\Phi(p)|_{\mathcal{Q}}.
	 \]
	 Here $Q$ denotes the projection operator onto $\mathcal{Q}$.

	The Hilbert module $\mathcal{H}$ is said to be \emph{essentially normal} if
the cross commutator $[\Phi(z_i),\Phi(z_j)^*]$ is compact for all $i,j=1,\ldots,n.$ If moreover the corss commutators are in the Schatten-$p$ class $\mathcal{S}^p$ for some $p>0$, then we say that $\mathcal{H}$ is \emph{$p$-essentially normal}.
	
	A well known example of essentially normal Hilbert module is the Bergman module $L_a^2(\Omega)$ on a bounded strongly pseudoconvex domain $\Omega$ in $\cn$ with smooth boundary. In fact it is $p$-essentially normal for all $p>n$. The module action on $L_a^2(\Omega)$ is given by pointwise multiplications
	\[
	\Phi(z_i)=M_{z_i},\quad M_{z_i}f=z_if,\quad f\in L_a^2(\Omega), ~i=1,\ldots,n.
	\]
	For an ideal $I$ in $\C[z_1,\ldots,z_n]$, denote $[I]$ its closure in $L_a^2(\Omega)$. Then $[I]$ is a Hilbert submodule of $L_a^2(\Omega)$. The operators $R_i=M_{z_i}|_{[I]}, ~i=1,\ldots,n$ give the module action on $[I]$. 
	
	In a sequence of papers \cite{psummable}\cite{quotients}, Arveson conjectured that all submodules obtained by taking closure of a homogenous polynomial ideal in the Drury-Arveson module $H_n^2$ on the unit ball $\bn$ are $p$-essentially normal for all $p>n$.  Using the graded structure, one can show that the conjecture is equivalent to its analogous version on the Bergman space $L_a^2(\bn)$. It was then pointed out by the first author in \cite{index} that, more generally, an essentially normal Hilbert module may carry important geometric information. Suppose $\mathcal{H}$ is an irreducible essentially normal Hilbert module. Let $\mathcal{T}$ be the $C^*$-algebra generated by its module actions $\{\Phi(p): p\in\C[z_1,\ldots,z_n]\}$ and the identity operator $I$. Then the operators in $\mathcal{T}$ are essentially commutative, giving a short exact sequence
	\[
	0\to\mathcal{K}\to\mathcal{T}\to C(X)\to0.
	\] 
	Here $X$ is the joint essential Taylor spectrum of the tuple $\big(\Phi(z_1),\ldots,\Phi(z_n)\big)$. By the BDF-theory (cf. \cite{BDF}), the exact sequence gives an element $[\mathcal{H}]$ in the K-homology group $K_1(X)$. For the Bergman module $L_a^2(\bn)$ it can be shown that $X=\mathbb{S}_n=\partial\bn$, the unit sphere, and $[L_a^2(\bn)]$ is the fundamental class in $K_1(\mathbb{S}_n)$. For a homogenous ideal $I$, if the submodule $[I]$ is essentially normal, then $[I]$ defines the same $K$-homology element as $L_a^2(\bn)$. Things are more interesting on the quotient module $[I]^\perp$, in which case the $K$-homology element lies in $K_1\big(Z(I)\cap\mathbb{S}_n\big)$. Several generalizations of Arveson's conjecture were listed in \cite{index}. In particular, for a quotient module $[I]^\perp$ the range of $p$ where $[I]^\perp$ is $p$-essentially normal is conjectured to be $p>\dim_{\C}Z(I)$. Also, questions on the $p$-essential normality of submodules/quotient modules over strongly pseudoconvex domains was raised. These are generally refereed to as the Arveson-Douglas Conjecture. Various effort have been invested into the study of this conjecture, \cite{douglaswang}\cite{eschmeier}\cite{fangxia}\cite{Guo}\cite{GWK}\cite{GWKquasi}
	\cite{GZhao}\cite{KS2012}\cite{Shalit}, to list just a few. See \cite{survey} for a more detailed survey on this conjecture.
	
	In \cite{douglaswang}, the first author and K. Wang proved the surprising result that a principal submodule generated by any polynomial in the Bergman space on the unit ball $\bn$ is $p$-essentially normal for all $p>n$. Later Fang and Xia \cite{fangxia} extended this result to more general spaces, including the Hardy space on $\bn$. These results suggest that the conjecture might be true under a very general setting.
	
	The main goal of this paper is to prove the following result.
	
	\begin{thm}[Theorem \ref{thm: main}]\label{thm: h}
		Suppose $\Omega\subseteq\cn$ is a bounded strongly pseudoconvex domain with smooth boundary, $h$ is a holomorphic function defined in an open neighborhood of $\clb$. Then the principal submodule generated by $h$,
		\[
		[h]=\overline{\{hf:~f\in L_a^2(\Omega)\}},
		\] 
		is $p$-essentially normal for all $p>n$.
	\end{thm}
	
	In \cite{douglaswang}, the authors took an approach which involves some global estimate in which the controlling constant increases to infinity as the degree of the generating polynomial increases to infinity. In this paper, we take a different approach, which involves a sub-mean-type inequality (see Theorem \ref{thm: key inequality}) that can be proved using local estimates. Once the inequality is proved, the main result follows from a relatively standard argument.
	
	For a complex zero variety $V$, we define the submodule $\PP_V$ consisting of functions in $L_a^2(\Omega)$ that vanish on $V$. A geometric version of the Arveson-Douglas Conjecture was raised by Engli\v{s} and Eschmeier in \cite{ee}. For a nice variety $V$ it was conjectured that the quotient module $\mathcal{Q}_V=\PP_V^\perp$ is $p$-essential normality for $p>\dim_{\C}V$. 
	
	Suppose $V$ is of pure dimension $n-1$ in an open neighborhood of $\overline{\Omega}$, and assume that $\Omega$ satisfies an additional topological assumption that ensures solvability of the second Cousin problem, then (see Lemma \ref{lem: P_V for contractible}) $V$ has a defining function $h$ so that
	\[
	\mathcal{P}_V=\{hf: f\in\Hol(\Omega), ~hf\in L_a^2(\Omega)\}.
	\]
	As a byproduct of our main result, we show the following.
	\begin{thm}[Theorem \ref{thm: codim 1 on bn}]\label{thm: V}
		Suppose $V$ is a pure $(n-1)$-dimensional analytic subset of an open neighborhood of $\clb$, then $V$ has a minimal defining function $h$ on an open neighborhood of $\clb$. Moreover, we have
		\begin{equation*}
			[h]=\PP_V=\{hf: f\in\Hol(\bn), hf\in L_a^2(\bn)\}.
		\end{equation*}
		In particular, the submodule $\PP_V$ and the quotient module $\PP_V^\perp$ are $p$-essentially normal for all $p>n$.
	\end{thm}
	
	This paper is organized as follows. In section 2 we introduce tools and notations involving strongly pseudoconvex domains. In section 3 we prove the sub-mean-type inequality mentioned above. This will contain most of the technicality in this paper. In section 4 we prove our main result using the inequality. In section 5 we study the submodule $\PP_V$ and prove Theorem \ref{thm: V}.
	
	The third author would like to thank professor Kai Wang in Fudan University for valuable discussions. We also want to thank professor Jingbo Xia and Quanlei Fang for reading a previous version of the paper in full details and giving valuable suggestions. We would also like to thank the anonymous referees for reading the paper carefully and providing important comments and suggestions.
	
	\section{Preliminaries}
	
	%Defining function, strongly pseudoconvex smooth boundary, Kobayashi distance
	In this section we introduce some notions and tools involving strongly pseudoconvex domains. Our definitions and lemmas come from \cite{feff}\cite{krantz}\cite{bmo}\cite{weighted}\cite{Range}.
	
	\begin{defn}\label{defn: pseudoconvex}
	\begin{enumerate}
		\item For $\Omega$ a bounded domain in $\cn$ with smooth boundary, we call $\rho(z)$ a \emph{defining function} for $\Omega$ provided
		\begin{itemize}
			\item[(i)] $\Omega=\{z\in\cn:~\rho(z)<0\}$ and $\rho(z)\in C^{\infty}(\cn)$.
			\item[(ii)] $|\mathrm{grad}\rho(z)|\neq0$ for all $z\in\partial \Omega$.
		\end{itemize}
	\item For $\Omega$ a \emph{bounded strongly pseudoconvex domain with smooth boundary} we mean that there are a defining function $\rho$ of $\Omega$ and a constant $k>0$ such that
	\begin{equation}\label{eqn: levi form condition}
	\sum_{i,j=1}^n\frac{\partial^2\rho(\zeta)}{\partial z_i\partial\bar{z}_j}\xi_i\bar{\xi}_j\geq k|\xi|^2,\quad\forall\zeta\in\partial\Omega, ~\forall\xi\in\cn.
	\end{equation}
In the sequel, when we say that $\rho$ is a defining function for a bounded strongly pseudoconvex domain $\Omega$ with smooth boundary, we mean that $\rho$ is a defining function of $\Omega$ satisfying \eqref{eqn: levi form condition}.
		\item Suppose $\Omega$ is a bounded strongly pseudoconvex domain with smooth boundary and $\rho$ a defining function. For a point $\zeta\in\partial\Omega$, the vector
		\begin{equation}
		\bpartial\rho(\zeta)=\big(\frac{\partial\rho(\zeta)}{\partial\bar{z}_1},\frac{\partial\rho(\zeta)}{\partial\bar{z}_2},\ldots,\frac{\partial\rho(\zeta)}{\partial\bar{z}_n}\big)^T
		\end{equation}
		is non-zero and represents the \emph{complex normal direction} at $\zeta$.
		The \emph{complex tangent space} $T^{\C}_\zeta(\partial\Omega)$ (cf. \cite{bmo}) at $\zeta$ is defined to be the orthogonal complement of $\bpartial\rho(\zeta)$, i.e.,
		\begin{equation*}
			T_\zeta^{\C}(\partial \Omega)=\bigg\{\xi\in\cn:~\sum_{j=1}^n\frac{\partial \rho(\zeta)}{\partial z_j}\xi_j=0\bigg\}.
		\end{equation*}
		The definition does not depend on the choice of a defining function.
		 Since $\partial\Omega$ is smooth, for $z$ close enough to $\partial\Omega$ there exists a unique point $\pi(z)\in\partial\Omega$ with $d(z,\pi(z))=d(z,\partial\Omega)$, $d$ being the Euclidean distance. Moreover, the map $z\mapsto\pi(z)$ is smooth. (cf. \cite[Theorem 4.8 (5) and Lemma 4.11]{FeCurvatureMeasure}). For such $z$, define the \emph{complex normal (tangent) direction} at $z$ to be the corresponding complex normal (tangent) direction at $\pi(z)$.
	\end{enumerate}
	\end{defn}

\begin{rem}\label{rem: different definitions}
In classic literature, for example, \cite{krantz} and \cite{Range}, strongly pseudoconvex domains, also called strictly Levi pseudoconvex domains, are defined by the weaker condition that \eqref{eqn: levi form condition} holds only for $\xi\in T^{\C}_\zeta(\partial\Omega), \zeta\in\partial\Omega$. There is a related type of domain called strictly pseudoconvex domain, for which one requires \eqref{eqn: levi form condition} but drops the requirement (ii) in Definition \ref{defn: pseudoconvex}. However, in the case when $\Omega$ is bounded with smooth boundary, these definitions coincide and can be strengthened into the one we give in Definition \ref{defn: pseudoconvex} (cf. \cite[Page 60]{Range}). This allows us to cite references on both strongly pseudoconvex domains and strictly pseudoconvex domains.
\end{rem}

\noindent{\bf Notations:} For convenience, we fix the following notations in this paper. Suppose $\Omega$ is a bounded strongly pseudoconvex domain with smooth boundary and $\rho$ is a defining function for $\Omega$.
\begin{enumerate}
	\item For $\delta>0$, denote
	\begin{equation*}
	\Omega_\delta=\{z\in \Omega:~d(z,\partial \Omega)<\delta\},
	\end{equation*}
and
\begin{equation*}
R_\delta=\{(z,w)\in\overline{\Omega}\times\overline{\Omega}: |\rho(z)|+|\rho(w)|+|z-w|<\delta\}.
\end{equation*}
Denote
\begin{equation*}
	\Gamma=\{(z,z): z\in\partial\Omega\}.
\end{equation*}
\item Denote $\delta(z)$ the Euclidean distance of a point $z$ to $\partial\Omega$.
\begin{equation*}
\delta(z)=d(z,\partial\Omega).
\end{equation*}
	\item For $r>0$ and $z\in\C$, denote $\Delta(z, r)$ the open disk centered at $z$ with radius $r$.
	\begin{equation*}
	\Delta(z, r)=\{w\in\C: |z-w|<r\}.
	\end{equation*}
Denote $\D=\Delta(0,1)$ the open unit disk. In general, $\mathbb{B}_k(z,r)$ denotes the open ball in $\C^k$ centered at $z\in\C^k$ with radius $r$. 
	\item Suppose an orthonormal basis of $\cn$ is chosen. For $r_1, r_2, \ldots, r_n>0$, denote $P_{r_1,\ldots, r_n}$ the polydisk in $\cn$ with radius $r_i$ at the $i$-th direction.
	\begin{equation*}
	P_{r_1,\ldots,r_n}=\Delta(0,r_1)\times\ldots\times\Delta(0,r_n)=\{z\in\cn: |z_i|<r_i, i=1,\ldots,n\}.
	\end{equation*}
\item In Definition \ref{defn: pseudoconvex}, for $\delta>0$ small enough and $w\in\Omega_\delta$, the complex normal direction and complex tangential direction at $w$ are defined to be those of $\pi(w)$. For $a, b>0$, denote $P_w(a,b)$ the polydisk centered at $w$ with radius $a$ in the complex normal direction and radius $b$ in each of the complex tangential directions. The ambiguity caused by different choices of basis in the complex tangent space does not cause trouble in most cases. In the more explicit estimates (eg. the proof of Lemma \ref{lem: key inequality local}), we will specify the choice of basis before using this notation.
\item We use the notations $\approx$, $\lesssim$ and $\gtrsim$ to denote relations ``up to a constant (constants)'' between positive scalars . For example, $A\approx B$ means there exist $0<c<C$ such that $cB<A<CB$. $A\lesssim B$ means there exists a constant $C>0$ so that $A<CB$. The notations $C, c$ will be used to denote general constants and may vary in different places.
\item For a positive integer $k$, denote $v_k$ the Lebesgue measure on $\C^k$. For an open set $A$ in $\C^k$, $|A|$ denotes its volume $v_k(A)$.
\end{enumerate}

	\begin{lem}{\cite[Lemma 8]{bmo}}\label{lem: rho approx delta}
		Let $\Omega$ be a bounded strongly pseudoconvex domain with smooth boundary. Fix any defining function $\rho$, then for $z$ in a neighborhood of $\overline{\Omega}$,
		\begin{equation*}
		|\rho(z)|\approx\delta(z).
		\end{equation*}
	\end{lem}
	For this reason, in most of our discussions, using either $|\rho(z)|$ or $\delta(z)$ does not make a difference. We will choose whichever is more convenient.
	
	\begin{defn}
		Let $\Omega\subseteq\cn$ be a bounded strongly pseudoconvex domain with smooth boundary. The \emph{Bergman space} $L_a^2(\Omega)$ consists of all holomorphic functions on $\Omega$ which are square integrable with respect to the Lebesgue measure $v_n$.
		\[
		L_a^2(\Omega)=\{f\in \Hol(\Omega):~\int_{\Omega}|f(z)|^2dv_n(z)<\infty\}.
		\]
		For $l\geq0$, one defines the \emph{weighted Bergman space} $L_{a,l}^2(\Omega)$ in a similar way.
		\[
		L_{a,l}^2(\Omega)=\{f\in \Hol(\Omega):~\int_{\Omega}|f(z)|^2|\rho(z)|^ldv_n(z)<\infty\}.
		\]
	\end{defn}
	Standard arguments show that the Bergman and weighted Bergman spaces are reproducing kernel Hilbert spaces. We use $K(z,w)$ and $K_l(z,w)$ to denote their reproducing kernels, i.e.,
	\[
	f(z)=\int_{\Omega}f(w)K(z,w)dv_n(w),\quad\forall f\in L_a^2(\Omega), ~ z\in\Omega,
	\]
	\[
	f(z)=\int_{\Omega}f(w)K_l(z,w)|\rho(w)|^ldv_n(w),\quad\forall f\in L_{a,l}^2(\Omega), ~ z\in\Omega.
	\]
	
	\begin{defn}
	Suppose $\Omega\subseteq\cn$ is a bounded strongly pseudoconvex domain with smooth boundary, $z\in\Omega$ and $\xi\in\cn$, the \emph{infinitesimal Kobayashi metric} (cf. \cite{krantz}\cite{lang}\cite{bmo}) of $\Omega$ is defined by
	\[
	F_K(z,\xi)=\inf\{\alpha>0:~\exists f\in\D(\Omega) \mbox{ with }f(0)=z \mbox{ and }f'(0)=\xi/\alpha\},
	\]
	where $\D(\Omega)$ denotes the set of all holomorphic mappings from the open unit disc $\D$ to $\Omega$. For any $C^1$ curve $\gamma(t):[0,1]\to\Omega$, we define the \emph{Kobayashi length} of $\gamma(t)$ as
	\[
	L_K(\gamma)=\int_0^1F_K(\gamma(t),\gamma'(t))dt.
	\]
	If $z, w\in\Omega$, we write
	\[
	\beta(z,w)=\inf_\gamma\{L_K(\gamma)\},
	\]
	 where the infimum is taken over all $C^1$ curves with $\gamma(0)=z$ and $\gamma(1)=w$. Then $\beta(z,w)$ is a complete metric $\Omega$.

	For $w\in \Omega$ and $r>0$, denote $E(w,r)$ the \emph{Kobayashi ball} of radius $r$.
	\[
	E(w,r)=\{z\in \Omega:~\beta(z,w)<r\}.
	\]
	\end{defn}
	
	For fixed $r>0$ and any $w\in\Omega$ close to the boundary, a Kobayashi ball centered at $w$ is comparable to a polydisk centered at $w$. More explicitly, the following lemma holds.
	
	\begin{lem}{\cite[Lemma 6]{bmo}}\label{lem: Kball equiv to polydisk}
		Let $\Omega\subset\cn$ be a bounded strongly pseudoconvex domain with smooth boundary and $r>0$. If $z\in \Omega_{\delta}$ with $\delta>0$ small enough, then there are constants $a_i$ and $b_i$, $i=1,2$ only depending on $r$ and $\Omega$, such that
		\begin{equation*}
			P_w(a_1\dw,b_1\dw^{1/2})\subseteq E(w,r)\subseteq P_w(a_2\dw,b_2\dw^{1/2}).
		\end{equation*}
		In particular, $v_n(E(w,r))\approx\dw^{n+1}$.
	\end{lem}
	
	\begin{defn}\label{defn: X F}
	Suppose $\Omega$ is a bounded strongly pseudoconvex domain with smooth boundary. Fix some defining function $\rho(z)$ of $\Omega$. Let
	\begin{flalign*}
		X(z,w)=-\rho(z)-\sum_{j=1}^n\frac{\partial\rho(z)}{\partial z_j}(w_j-z_j)-1/2\sum_{j,k=1}^n\frac{\partial^2\rho(z)}{\partial z_j\partial z_k}(w_j-z_j)(w_k-z_k)
	\end{flalign*}
and
	\begin{equation*}
		F(z,w)=|\rho(z)|+|\rho(w)|+|\mathrm{Im} X(z,w)|+|z-w|^2.
	\end{equation*}
	\end{defn}
	
	The functions $X, F$ are useful in the estimates of reproducing kernels.
		
	\begin{lem}[\cite{bmo}]\label{lem: Bergman kernel}
		There exists a $\delta>0$ such that
		\begin{equation*}
			|K(z,w)|\approx\Xzw^{-(n+1)},\quad (z,w)\in R_\delta.
		\end{equation*}
		Moreover, $K(z,w)\in C^{\infty}(\overline{\Omega}\times\overline{\Omega}\backslash\Gamma)$
	\end{lem}

	\begin{lem}[\cite{weighted} Theorem 2.3]\label{lem: weighted Bergman kernel}
		Let $\Omega$ be a bounded strongly pseudoconvex domain with smooth boundary in $\cn$. 
		Then for $l\geq0$ there exists a kernel $G_l(z,w)$ such that:
		\begin{itemize}
			\item[~~(i)] $G_l(z,w)\in C^{\infty}(\overline{\Omega}\times\overline{\Omega}\backslash\Gamma)$, $G_l(z,w)$ is holomorphic in $z$.
			\item[~(ii)] $G_l$ reproduces the holomorphic functions in $L_{a,l}^2(\Omega)$; i.e., for $f\in L_{a,l}^2(\Omega)$,
			\[
			f(z)=\int_{\Omega}G_l(z,w)f(w)|\rho(w)|^ldv_n(w).
			\]
			\item[(iii)] For some $\delta>0$,
			\begin{equation*}
			|G_l(z,w)|\approx\Xzw^{-(n+1+l)},\quad (z,w)\in R_\delta.
			\end{equation*}
		\end{itemize}
	\end{lem}

\begin{lem}\label{lem: X F approx}
	Let $\Omega$ be a bounded strongly pseudoconvex domain with smooth boundary, then for some $\delta>0$, the following quantities are equivalent on $R_\delta$. 
	\begin{itemize}
		\item[(1)] $|X(z,w)|$;
		\item[(2)] $|X(w,z)|$;
		\item[(3)] $F(z,w)$;
		\item[(4)] $F(w,z)$;
		\item[(5)] $|\rho(z)|+|\rho(w)|+|z-w|^2+\big|\langle w-z,\bpartial\rho(z)\rangle\big|$;
		\item[(6)] $|\rho(z)|+|\rho(w)|+|z-w|^2+\big|\langle w-z,\bpartial\rho(w)\rangle\big|$;
		\item[(7)] $|\rho(z)|+|\rho(w)|+|z-w|^2+\big|\langle w-z,\bpartial\rho(\pi(z))\rangle\big|$;
		\item[(8)] $|\rho(z)|+|\rho(w)|+|z-w|^2+\big|\langle w-z,\bpartial\rho(\pi(w))\rangle\big|$.
		\end{itemize}
Moreover, the equivalences from (3)-(8) hold for all of $(z,w)\in\Omega\times\Omega$.
\end{lem}

\begin{proof}
The equivalence that for some $\delta>0$,
\[
|X(z,w)|\approx F(z,w)\approx |\rho(z)|+|\rho(w)|+|z-w|^2+\big|\langle w-z,\bpartial\rho(z)\rangle\big|,\quad (z,w)\in R_\delta
\]
was used in, for example, \cite{feff}\cite{bmo}. We give a proof for completeness. Take the Taylor expansion of the defining function $\rho(w)$ at $z$:
\begin{flalign*}
\rho(w)=&\rho(z)+2\mathrm{Re}\sum_{j=1}^n\frac{\partial\rho(z)}{\partial z_j}(w_j-z_j)\\
&+\mathrm{Re}\sum_{j,k=1}^n\frac{\partial^2\rho(z)}{\partial z_j\partial z_k}(w_j-z_j)(w_k-z_k)+\sum_{j,k=1}^n\frac{\partial^2\rho(z)}{\partial z_j\partial\bar{z}_k}(w_j-z_j)\overline{(w_k-z_k)}+O(|w-z|^3).
\end{flalign*}
By Definition \ref{defn: X F} and the above,
\begin{flalign*}
&2\mathrm{Re}X(z,w)\\
=&-2\rho(z)-2\mathrm{Re}\sum_{j=1}^n\frac{\partial\rho(z)}{\partial z_j}(w_j-z_j)-\mathrm{Re}\sum_{j,k=1}^n\frac{\partial^2\rho(z)}{\partial z_j\partial z_k}(w_j-z_j)(w_k-z_k)\\
=&-\rho(w)-\rho(z)+\sum_{j,k=1}^n\frac{\partial^2\rho(z)}{\partial z_j\partial\bar{z}_k}(w_j-z_j)\overline{(w_k-z_k)}+O(|w-z|^3).
\end{flalign*}
By assumption, for some $\delta>0$,
\[
\sum_{j,k=1}^n\frac{\partial^2\rho(z)}{\partial z_j\partial\bar{z}_k}(w_j-z_j)\overline{(w_k-z_k)}\approx|w-z|^2,\quad (z,w)\in R_\delta.
\]
For $z, w\in\overline{\Omega}$, $-\rho(w)-\rho(z)\geq 0$. Choose $\delta>0$ small enough, then the above implies that for $(z,w)\in R_\delta$,
\begin{equation*}
\mathrm{Re}X(z,w)\geq0,
\end{equation*}
and
\begin{equation}\label{eqn: temp ReX}
|\mathrm{Re}X(z,w)|\approx|\rho(w)|+|\rho(z)|+|w-z|^2.
\end{equation}
It follows that for $(z,w)\in R_\delta$,
\begin{flalign}\label{eqn: temp X 1}
|X(z,w)|\approx&|\mathrm{Re}X(z,w)|+|\mathrm{Im}X(z,w)|\nonumber\\
\approx&|\rho(w)|+|\rho(z)|+|w-z|^2+|\mathrm{Im}X(z,w)|\\
=&F(z,w).\nonumber
\end{flalign}
By Definition \ref{defn: X F},
\begin{equation}\label{eqn: temp ImX}
	\mathrm{Im}X(z,w)=-\mathrm{Im}\sum_{j=1}^n\frac{\partial\rho(z)}{\partial z_j}(w_j-z_j)+O(|w-z|^2).
\end{equation}
Therefore by \eqref{eqn: temp ReX} and \eqref{eqn: temp ImX}, 
\begin{flalign}\label{eqn: temp X lesssim}
|X(z,w)|\approx&|\mathrm{Re}X(z,w)|+|\mathrm{Im}X(z,w)|\\
\lesssim&|\rho(w)|+|\rho(z)|+|w-z|^2+\bigg|\sum_{j=1}^n\frac{\partial\rho(z)}{\partial z_j}(w_j-z_j)\bigg|,\nonumber
\end{flalign}
and
\begin{flalign}\label{eqn: temp X gtrsim}
|X(z,w)|\approx&|\mathrm{Re}X(z,w)|+|\mathrm{Im}X(z,w)|\\
\gtrsim&|\rho(w)|+|\rho(z)|+|w-z|^2+\bigg|\mathrm{Im}\sum_{j=1}^n\frac{\partial\rho(z)}{\partial z_j}(w_j-z_j)\bigg|.\nonumber
\end{flalign}
On the other hand, by Definition \ref{defn: X F} and \eqref{eqn: temp ReX},
\begin{equation}\label{eqn: temp Re normal}
\bigg|\mathrm{Re}\sum_{j=1}^n\frac{\partial\rho(z)}{\partial z_j}(w_j-z_j)\bigg|\lesssim|\mathrm{Re}X(z,w)|+|\rho(z)|+|w-z|^2\lesssim|\mathrm{Re}X(z,w)|.
\end{equation}
By \eqref{eqn: temp X lesssim}, \eqref{eqn: temp X gtrsim} and \eqref{eqn: temp Re normal}, we have
\begin{flalign}\label{eqn: temp X 2}
|X(z,w)|\approx&|\rho(w)|+|\rho(z)|+|w-z|^2+\bigg|\sum_{j=1}^n\frac{\partial\rho(z)}{\partial z_j}(w_j-z_j)\bigg|\nonumber\\
=&|\rho(w)|+|\rho(z)|+|w-z|^2+|\langle w-z, \bpartial\rho(z)\rangle|.
\end{flalign}
Combining \eqref{eqn: temp X 1} and \eqref{eqn: temp X 2} gives that,
\[
|X(z,w)|\approx F(z,w)\approx |\rho(w)|+|\rho(z)|+|w-z|^2+|\langle w-z, \bpartial\rho(z)\rangle|,\quad (z,w)\in R_\delta.
\]
In other words, (1)$\approx$(3)$\approx$(5).
By the smoothness assumption, the quantities (5)-(8) differ at most by constant multiples of 
\[
|w-z|^2+|\rho(z)|+|\rho(w)|.
\]
Thus they are equivalent, i.e., (1)$\approx$(3)$\approx$(5)-(8). Finally, switching $z$ and $w$ turns (5) into (6) which are equivalent. So (1)$\approx$(2), (3)$\approx$(4). This proves the equivalences on $R_\delta$ for $\delta$ small enough. By Definition \ref{defn: X F}, for $(z,w)$ outside $R_\delta$, the quantities $F(z,w)$ and $ F(w,z)$ are bounded both from above and away from zero. So do the quantities (5)-(8). This implies that the equivalence (3)-(8) holds for all $z, w\in\Omega$. This completes the proof.
\end{proof}

\begin{rem}\label{rem: Kzw Glzw lesssim Fzw}
	Take $\delta>0$ such that Lemma \ref{lem: Bergman kernel}, Lemma \ref{lem: weighted Bergman kernel} and Lemma \ref{lem: X F approx} hold for $R_\delta$. Then
	\[
	|K(z,w)|\approx F(z,w)^{-(n+1)},\quad|G_l(z,w)|\approx F(z,w)^{-(n+1+l)},\quad\forall (z,w)\in R_\delta.
	\]
	Since $K(z,w), G_l(z,w)\in C^\infty(\overline{\Omega}\times\overline{\Omega}\backslash\Gamma)$, the functions $|G_l(z,w)|, |K(z,w)|$ are bounded on $\overline{\Omega}\times\overline{\Omega}\backslash R_\delta$. Also, the function $F(z,w)$ is bounded. Thus
	\begin{equation}\label{eqn: Kzw lesssim Fzw}
	|K(z,w)|\lesssim F(z,w)^{-(n+1)},\quad\forall z, w\in\Omega,
	\end{equation}
and
\begin{equation}\label{eqn: Glzw lesssim Fzw}
|G_l(z,w)|\lesssim F(z,w)^{-(n+1+l)},\quad\forall z, w\in\Omega.
\end{equation}
\end{rem}

The following lemma comes from the proof of \cite[Theorem 12]{bmo}.

\begin{lem}\label{lem: rzrw beta leq r}
	Fix some $r>0$. Then
	\begin{equation*}
		|\rho(z)|\approx|\rho(w)|,\quad\forall z, w\in \Omega,~ \beta(z,w)<r.
	\end{equation*}
\end{lem}

\begin{lem}\label{lem: Fzw beta leq r}
	Fix some $r>0$. Then 
	\begin{equation*}
	F(z,\lambda)\approx F(w,\lambda),\quad\forall z, w, \lambda\in\Omega, ~ \beta(z,w)<r.
	\end{equation*}
\end{lem}

\begin{proof}
By Lemma \ref{lem: X F approx}, for $z, w, \lambda\in\Omega$,
\begin{equation*}
F(z,\lambda)\approx|\rho(z)|+|\rho(\lambda)|+|z-\lambda|^2+\big|\langle z-\lambda,\bpartial\rho(\pi(z))\rangle\big|,
\end{equation*}
\begin{equation*}
	F(w,\lambda)\approx|\rho(w)|+|\rho(\lambda)|+|w-\lambda|^2+\big|\langle w-\lambda,\bpartial\rho(\pi(w))\rangle\big|.
\end{equation*}
By Lemma \ref{lem: Kball equiv to polydisk}, if $\beta(z,w)<r$,
\begin{equation*}
\big|\langle w-z,\bpartial\rho(\pi(z))\rangle\big|\lesssim|\rho(z)|\approx|\rho(w)|,\quad|w-z|^2\lesssim|\rho(z)|\approx|\rho(w)|.
\end{equation*}
Therefore by Lemma \ref{lem: rzrw beta leq r} and the above, if $\beta(z,w)<r$, then
\begin{flalign*}
F(z,\lambda)\approx&|\rho(z)|+|\rho(\lambda)|+|z-\lambda|^2+\big|\langle z-\lambda,\bpartial\rho(\pi(z))\big|\\
\lesssim&|\rho(w)|+|\rho(\lambda)|+\big(|w-\lambda|^2+|w-z|^2\big)+\big|\langle w-\lambda,\bpartial\rho(\pi(z))\big|+\big|\langle w-z,\bpartial\rho(\pi(z))\rangle\big|\\
\lesssim&|\rho(w)|+|\rho(\lambda)|+|w-\lambda|^2+\big|\langle w-\lambda,\bpartial\rho(\pi(z))\big|\\
\leq&|\rho(w)|+|\rho(\lambda)|+|w-\lambda|^2+\big|\langle w-\lambda,\bpartial\rho(\pi(w))\big|+\big|\langle w-\lambda,\bpartial\rho(\pi(w))-\bpartial\rho(\pi(z))\big|\\
\lesssim&|\rho(w)|+|\rho(\lambda)|+|w-\lambda|^2+\big|\langle w-\lambda,\bpartial\rho(\pi(w))\big|+|w-\lambda|^2+|w-z|^2\\
\lesssim&|\rho(w)|+|\rho(\lambda)|+|w-\lambda|^2+\big|\langle w-\lambda,\bpartial\rho(\pi(w))\big|\\
\approx&F(w,\lambda).
\end{flalign*}
This proves $F(z,\lambda)\lesssim F(w,\lambda).$ Switching $z$ and $w$ gives the reverse inequality. This completes the proof.
\end{proof}

\begin{lem}\label{lem: biholomorphic}
Suppose $\Omega\subset\cn$ is a bounded strongly pseudoconvex domain with smooth boundary and $\Phi$ is a biholomorphic map on a neighborhood of $\overline{\Omega}$. Let $\Omega'=\Phi(\Omega)$. Denote $F$ and $F'$ the functions defined as in Definition \ref{defn: X F} for $\Omega$ and $\Omega'$ respectively. Then
\begin{equation*}
F(z,w)\approx F'(\Phi(z),\Phi(w)),\quad z, w\in\Omega.
\end{equation*}
\end{lem}

\begin{proof}
	Fix a defining function $\rho$. Then $\rho\circ\Phi^{-1}$ is a defining function for $\Omega'$. By Lemma \ref{lem: rho approx delta} and Lemma \ref{lem: X F approx},
	\begin{equation}\label{eqn: temp F}
	F(z,w)\approx\delta(z)+\delta(w)+|z-w|^2+\big|\langle w-z,\bpartial\rho(z)\rangle\big|,
	\end{equation}
and
	\begin{equation}\label{eqn: temp F'}
		\big|F'(\Phi(z),\Phi(w))\big|\\
		\approx\delta\big(\Phi(z)\big)+\delta\big(\Phi(w)\big)+\big|\Phi(z)-\Phi(w)\big|^2+\big|\langle \Phi(w)-\Phi(z),\bpartial\big(\rho\circ\Phi^{-1}\big)(\Phi(z))\rangle\big|.
	\end{equation}
	Since $\Phi$ is biholomorphic, $\Phi$ is bi-Lipschitz. Therefore
	\[
	\delta(z)\approx\delta\big(\Phi(z)\big),\quad\delta(w)\approx\delta\big(\Phi(w)\big),\quad |z-w|\approx\big|\Phi(z)-\Phi(w)\big|.
	\]
	We look at the last terms of \eqref{eqn: temp F} and \eqref{eqn: temp F'} to deduce
	\begin{flalign*}
		&\big\langle \Phi(w)-\Phi(z),\bpartial\big(\rho\circ\Phi^{-1}\big)(\Phi(z))\big\rangle\\
		=&\sum_{j=1}^n\frac{\partial\rho\circ\Phi^{-1}(\Phi(z))}{\partial z_j}\big(\Phi_j(w)-\Phi_j(z)\big)\\
		=&\sum_{j=1}^n\sum_{i=1}^n\frac{\partial\rho(z)}{\partial z_i}\frac{\partial\Phi^{-1}_i(\Phi(z))}{\partial z_j}\bigg(\sum_{k=1}^n\frac{\partial\Phi_j(z)}{\partial z_k}(w_k-z_k)+O(|w-z|^2)\bigg)\\
		=&\sum_{i=1}^n\frac{\partial\rho(z)}{\partial z_i}\sum_{k=1}^n\bigg(\sum_{j=1}^n\frac{\partial\Phi_i^{-1}(\Phi(z))}{\partial z_j}\frac{\partial \Phi_j(z)}{\partial z_k}\bigg)(w_k-z_k)+O(|w-z|^2)\\
		=&\sum_{i=1}^n\frac{\partial\rho(z)}{\partial z_i}\sum_{k=1}^n\delta_{ik}(w_k-z_k)+O(|w-z|^2)\\
		=&\sum_{i=1}^n\frac{\partial\rho(z)}{\partial z_i}(w_i-z_i)+O(|w-z|^2)\\
		=&\langle w-z, \bpartial\rho(z)\rangle+O(|w-z|^2).
	\end{flalign*}
	From this it is clear that $F(z,w)\approx F'(\Phi(z),\Phi(w))$. This completes the proof.
\end{proof}

We will also use the following Rudin-Forelli type estimates on $\Omega$.
	
	\begin{lem}{\cite[Lemma 2.7]{weighted}}\label{lem: Rudin Forelli}
		Let $\Omega$ be a bounded strongly pseudoconvex domain in $\cn$ with smooth boundary. Let $a\in\mathbb{R}$, $\nu>-1$, then
		\[
		\int_{\Omega}\frac{|\rho(w)|^{\nu}}{F(z,w)^{n+1+\nu+a}}dv_n(w)\approx
		\begin{cases}
			1&\text{if }a<0\\
			\log|\rho(z)|^{-1}&\text{if }a=0\\
			|\rho(z)|^{-a}&\text{if }a>0
		\end{cases}.
		\]
	\end{lem}

	\section{An Inequality}\label{sec: an inequality}
	The goal of this section is to prove the following estimate, which plays a key role in the proof of our main theorem.
	\begin{thm}\label{thm: key inequality}
		Suppose $\Omega\subseteq\cn$ is a bounded strongly pseudoconvex domain with smooth boundary and $h$ is a holomorphic function defined in an open neighborhood of $\overline{\Omega}$. Then there exist constants $N>0, C>0$ such that for any $w,z\in \Omega$ and $f\in \Hol(\Omega)$,
		\begin{equation}\label{eqn: key inequality}
			|h(z)f(w)|\leq C\frac{F(z,w)^N}{|\rho(w)|^{N+n+1}}\int_{E(w,1)}|h(\lambda)f(\lambda)|\intd v_n(\lambda).
		\end{equation}
	Here the function $F$ is defined as in Definition \ref{defn: X F}.
	\end{thm}
	Let us first give an outline of the proof. By Lemma \ref{lem: Kball equiv to polydisk}, for $w$ close to the boundary, $E(w,1)$ is comparable with some $P_w(a\delta(w),b\delta(w)^{1/2})$. A large part of this section is to show that the following inequality holds for $(z,w)\in R_\delta$, for some $\delta>0$.
	\begin{flalign}\label{eqn: ln|h| inequality}
		\log|h(z)|\leq& \frac{1}{|P_w(a\delta(w),b\delta(w)^{1/2})|}\int_{P_w(a\delta(w),b\delta(w)^{1/2})}\log|h(\lambda)|\intd v_n(\lambda)\\
		&~~~+N\log\frac{F(z,w)}{|\rho(w)|}+\text{some constant}.\nonumber
	\end{flalign}
	For any $f\in\Hol(\Omega)$, the function $\log|f|$ is pluri-subharmonic. So 
	\begin{equation}\label{eqn: ln|f| inequality}
		\log|f(w)|\leq \frac{1}{|P_w(a\delta(w),b\delta(w)^{1/2})|}\int_{P_w(a\delta(w),b\delta(w)^{1/2})}\log|f(\lambda)|\intd v_n(\lambda)
	\end{equation}
Adding up \eqref{eqn: ln|h| inequality} and \eqref{eqn: ln|f| inequality}, and then applying Jensen's inequality yields
		\begin{equation*}
		|h(z)f(w)|\lesssim\bigg(\frac{F(z,w)}{|\rho(w)|}\bigg)^{N}\cdot\frac{1}{|P_w(a\delta(w),b\delta(w)^{1/2})|}\int_{P_w(a\delta(w),b\delta(w)^{1/2})}|h(\lambda)f(\lambda)|\intd v_n(\lambda).
	\end{equation*}
From this we get the following local version of Theorem \ref{thm: key inequality}.
\begin{lem}\label{lem: key inequality Kdelta}
		Suppose $\Omega\subseteq\cn$ is a bounded strongly pseudoconvex domain with smooth boundary and $h$ is a holomorphic function defined in an open neighborhood of $\overline{\Omega}$. Then there exist $\delta>0$, $N>0, C>0$ such that \eqref{eqn: key inequality} holds for any $(z, w)\in R_\delta\cap\Omega\times\Omega$ and $f\in\Hol(\Omega)$.
	\end{lem}
Once Lemma \ref{lem: key inequality Kdelta} is established, Theorem \ref{thm: key inequality} follows from a relatively easy argument.

We start our proof. The following estimate holds by direct computation.
	\begin{lem}\label{lem: key inequality at dim 1 degree 1}
		For any $z, a\in\C$ and $r>0$,
		\begin{equation*}
			\log|z-a|<\frac{1}{|\Delta(0,r)|}\int_{\Delta(0,r)}\log|\lambda-a|\intd v_1(\lambda)+\log\frac{r+|z|}{r}+1.
		\end{equation*}
	\end{lem}
	
	\begin{proof}
		We split the proof into two cases.
		
		\noindent\textbf{Case 1: $|a|\geq r$.}
		
		In this case the function $\log|\lambda-a|$ is harmonic in $\Delta(0,r)$. Therefore
		\[
		\frac{1}{|\Delta(0,r)|}\int_{\Delta(0,r)}\log|\lambda-a|\intd v_1(\lambda)=\log|a|.
		\]
		It follows that
		\[
		\log|z-a|-\frac{1}{|\Delta(0,r)|}\int_{\Delta(0,r)}\log|\lambda-a|\intd v_1(\lambda)-\log\frac{r+|z|}{r}=\log\frac{|z-a|r}{|a|(r+|z|)}.
		\]
		Set $M=\max\{r+|z|, |a|\}$ and $m=\min\{r+|z|, |a|\}$. Then
		\[
		|z-a|\leq|z|+|a|\leq 2\max\{|z|, |a|\}\leq 2\max\{r+|z|, |a|\}=2M,
		\]
		and
		\[
		r\leq\min\{r+|z|, |a|\}=m.
		\]
		Therefore
		\[
		\frac{|z-a|r}{|a|(r+|z|)}=\frac{|z-a|r}{M m}\leq 2.
		\]
		This shows that
		\[
		\log|z-a|-\frac{1}{|\Delta(0,r)|}\int_{\Delta(0,r)}\log|\lambda-a|\intd v_1(\lambda)-\log\frac{r+|z|}{r}=\log\frac{|z-a|r}{|a|(r+|z|)}\leq\log 2<1.
		\]
		This proves the inequality in the case 1.
		
		~
		
		\noindent\textbf{Case 2: $|a|<r$.}
		
		\begin{flalign*}
			&\int_{\Delta(0,r)}\log|\lambda-a|\intd v_1(\lambda)\\
			=&\int_{\Delta(a,r)}\log|\lambda-a|\intd v_1(\lambda)-\int_{\Delta(a,r)\backslash\Delta(0,r)}\log|\lambda-a|\intd v_1(\lambda)+\int_{\Delta(0,r)\backslash\Delta(a,r)}\log|\lambda-a|\intd v_1(\lambda)\\
			\geq&\int_{\Delta(a,r)}\log|\lambda-a|\intd v_1(\lambda)-\int_{\Delta(a,r)\backslash\Delta(0,r)}\log r\intd v_1(\lambda)+\int_{\Delta(0,r)\backslash\Delta(a,r)}\log r\intd v_1(\lambda)\\
			=&\int_{\Delta(a,r)}\log|\lambda-a|\intd v_1(\lambda)\\
			=&\int_0^r2\pi s\log s\intd s\\
			=&\pi r^2\big(\log r-\frac{1}{2}\big).
		\end{flalign*}
		It follows that
		\begin{flalign*}
			&\log|z-a|-\frac{1}{|\Delta(0,r)|}\int_{\Delta(0,r)}\log|\lambda-a|\intd v_1(\lambda)-\log\frac{r+|z|}{r}\\
			\leq&\log|z-a|-\big(\log r-\frac{1}{2}\big)-\log\frac{r+|z|}{r}\\
			=&\log\frac{|z-a|}{r+|z|}+\frac{1}{2}\\
			\leq&\log\frac{|a|+|z|}{r+|z|}+\frac{1}{2}\\
			\leq&\log 1+\frac{1}{2}\\
			<&1.
		\end{flalign*}
		This proves the inequality in case 2.
		The proof is complete.
	\end{proof}

	\begin{lem}\label{lem: key inequality in dim 1 degree d}
		Suppose $p$ is a one-variable polynomial of degree less than or equal to $d$. Then for any $r>0$ and $z\in\C$,
		\begin{equation*}
			\log|p(z)|\leq\frac{1}{|\Delta(0,r)|}\int_{\Delta(0,r)}\log|p(\lambda)|\intd v_1(\lambda)+d\log\frac{r+|z|}{r}+d.
		\end{equation*}
	\end{lem}
	
	\begin{proof}
The case $p\equiv0$ is trivial. Suppose $p\nequiv0$. Then by assumption,
		\[
		p(\lambda)=c\prod_{i=1}^k(\lambda-a_i),\quad k\leq d,~ c\neq0.
		\]
		Therefore by Lemma \ref{lem: key inequality at dim 1 degree 1},
		\begin{flalign*}
			\log|p(z)|=&\log|c|+\sum_{i=1}^k\log|z-a_i|\\
			<&\log|c|+\sum_{i=1}^k\bigg[\frac{1}{|\Delta(0,r)|}\int_{\Delta(0,r)}\log|\lambda-a_i|\intd v_1(\lambda)+\log\frac{r+|z|}{r}+1\bigg]\\
			=&\frac{1}{|\Delta(0,r)|}\int_{\Delta(0,r)}\log|p(\lambda)|\intd v_1(\lambda)+k\log\frac{r+|z|}{r}+k\\
			\leq&\frac{1}{|\Delta(0,r)|}\int_{\Delta(0,r)}\log|p(\lambda)|\intd v_1(\lambda)+d\log\frac{r+|z|}{r}+d.
		\end{flalign*}
		This completes the proof.
	\end{proof}

For a $n$-variable polynomial $p\in\C[z_1,z_2,\ldots,z_n]$, the degree of $p$ is the highest degree of the monomials contained in $p$, i.e., for $p(z)=\sum a_\alpha z^\alpha$,
\[
\mathrm{degree} p=\{|\alpha|:~a_\alpha\neq0\}.
\]

\begin{lem}\label{lem: key inequality dim n degree d}
Suppose $p$ is a $n$-variable polynomial, i.e., $p\in\C[z_1,z_2,\ldots,z_n]$. Assume that $p$ is of degree less than or equal to $d$. Then for any $r_1, \ldots, r_n>0$ and $z\in\cn$,
\begin{equation*}
\log|p(z)|\leq\frac{1}{|P_{r_1,\ldots,r_n}|}\int_{P_{r_1,\ldots,r_n}}\log|p(\lambda)|\intd v_n(\lambda)+d\sum_{i=1}^n\log\frac{r_i+|z_i|}{r_i}+nd.
\end{equation*}
\end{lem}

\begin{proof}
The inequality follows from Lemma \ref{lem: key inequality in dim 1 degree d} by induction.
\begin{flalign*}
&\log|p(z)|\\
\leq&\frac{1}{|\Delta(0,r_1)|}\int_{\Delta(0,r_1)}\log|p(\lambda_1,z_2,\ldots,z_n)|\intd v_1(\lambda_1)+d\log\frac{r_1+|z_1|}{r_1}+d\\
\leq&\frac{1}{|\Delta(0,r_1)|}\int_{\Delta(0,r_1)}\bigg\{\frac{1}{|\Delta(0,r_2)|}\int_{\Delta(0,r_2)}\log|p(\lambda_1,\lambda_2,z_3,\ldots,z_n)|\intd v_1(\lambda_2)+d\log\frac{r_2+|z_2|}{r_2}+d\bigg\}\intd v_1(\lambda_1)\\
&+d\log\frac{r_1+|z_1|}{r_1}+d\\
=&\frac{1}{\pi^2r_1^2r_2^2}\int_{\Delta(0,r_1)\times\Delta(0,r_2)}\log|p(\lambda_1,\lambda_2,z_3,\ldots,z_n)|\intd v_2(\lambda_1,\lambda_2)+d\frac{r_2+|z_2|}{r_2}+d\log\frac{r_1+|z_1|}{r_1}+2d\\
&\ldots\\
\leq&\frac{1}{\pi^nr_1^2\ldots r_n^2}\int_{P_{r_1,\ldots,r_n}}\log|p(\lambda)|\intd v_n(\lambda)+d\sum_{i=1}^n\log\frac{r_i+|z_i|}{r_i}+nd\\
=&\frac{1}{|P_{r_1,\ldots,r_n}|}\int_{P_{r_1,\ldots,r_n}}\log|p(\lambda)|\intd v_n(\lambda)+d\sum_{i=1}^n\log\frac{r_i+|z_i|}{r_i}+nd.
\end{flalign*}
This completes the proof.
\end{proof}

	\begin{lem}\label{lem: W decomp}
		Suppose $R>0$ and $H(\xi,w)$ is a continuous function on $\big(\Delta(0,R)\big)^{n}\times\big(\Delta(0,R)\big)^n$. Assume that
		\begin{enumerate}
			\item for fixed $w$, $H(\xi, w)$ is holomorphic in $\xi$;
			\item for each $i=1, \ldots, n$, the one-variable holomorphic function $H(\xi_i e_i, 0)$ is not identically zero. Denote $d_i$ the multiplicity of $H(\xi_i e_i, 0)$ at $\xi_i=0$.
		\end{enumerate}
		Then there exist $0<r<R$, $0<c<C<\infty$ such that for any $i=1, 2, \ldots, n$,
		\begin{equation*}
			H(\xi, w)=P_i(\xi, w)\varphi_i(\xi, w),\quad\forall \xi, w\in\big(\Delta(0,r)\big)^n.
		\end{equation*}
		The functions $P_i, \varphi_i$ satisfy the following.
		\begin{enumerate}
			\item For each fixed $\xi_1, \ldots, \xi_{i-1}, \xi_{i+1}, \ldots, \xi_n, w_1, \ldots, w_n \in\Delta(0,r)$, $P_i(\xi, w)$ is a polynomial in $\xi_i$ of degree $d_i$.
			\item For any $\xi, w\in\big(\Delta(0,r)\big)^n$,
			\begin{equation*}
				c\leq|\varphi_i(\xi, w)|\leq C.
			\end{equation*}
		\end{enumerate}
	\end{lem}
	
	\begin{proof}
		We consider $i=1$. By assumption, $H(\xi_1e_1,0)$ is a one-variable holomorphic function of multiplicity $d_1$ at $0$. Choose $0<s_1<R$ so that
		\begin{equation*}
			H(\xi_1e_1, 0)\neq0,\quad \frac{s_1}{2}\leq|\xi_1|\leq s_1.
		\end{equation*}
		By continuity, there exists $0<s_2<R$ so that
		\begin{equation}\label{eqn: temp 2}
			H(\xi, w)\neq0,\quad \frac{s_1}{2}\leq|\xi_1|\leq s_1, ~ \xi'\in\overline{\big(\Delta(0,s_2)\big)^{n-1}}, ~ w\in\overline{\big(\Delta(0,s_2)\big)^n}.
		\end{equation}
		Here we denote $\xi'=(\xi_2,\ldots,\xi_n)$.
		Viewing $H(\xi, w)$ as a parameterized family of one-variable holomorphic functions in $\xi_1$, by \eqref{eqn: temp 2}, the continuity of $H$, and Rouch\'{e}'s Theorem, we know that for each $\xi'\in\overline{\big(\Delta(0,s_2)\big)^{n-1}}, ~ w\in\overline{\big(\Delta(0,s_2)\big)^n}$, $H(\xi,w)$ as a one-variable holomorphic function of $\xi_1$, has exactly $d_1$ zeros inside $\Delta(0,s_1)$, counting multiplicity.  Denote the $d_1$ zeros to be 
		\[
		a_1(\xi',w), a_2(\xi',w), \ldots, a_{d_1}(\xi', w).
		\]
		Then by \eqref{eqn: temp 2},
		\begin{equation}\label{eqn: temp 3}
			|a_j(\xi', w)|<\frac{s_1}{2}, \quad j=1,\ldots, d_1, ~ \xi'\in\overline{\big(\Delta(0,s_2)\big)^{n-1}}, ~ w\in\overline{\big(\Delta(0,s_2)\big)^n}.
		\end{equation}
		Define
		\begin{equation*}
			P_1(\xi, w)=\prod_{j=1}^{d_1}(\xi_1-a_j(\xi',w)),\quad \varphi_1(\xi, w)=\frac{H(\xi, w)}{P_1(\xi, w)}
		\end{equation*}
		for $\xi_1\in\Delta(0,s_1), \xi'\in\big(\Delta(0,s_2)\big)^{n-1}, w\in\big(\Delta(0,s_2)\big)^n.$
		Since for fixed $\xi'$ and $w$, the functions $P_1$ and $H$ are holomorphic functions in $\xi_1$ of exactly the same zeros in $\Delta(0,s_1)$, the function $\varphi_1$ is well-defined even at points of $P_1=0$ and is also holomorphic in $\xi_1$. By \eqref{eqn: temp 2}, there exist $0<m<M$ such that
		\begin{equation}\label{eqn: temp 5}
			m\leq|H(\xi,w)|\leq M ~\text{ for }~ |\xi_1|=s_1, ~ \xi'\in\big(\Delta(0,s_2)\big)^{n-1}, ~ w\in\big(\Delta(0,s_2)\big)^n.
		\end{equation}
		Also, by \eqref{eqn: temp 3}, 
		\begin{equation}\label{eqn: temp 6}
			(s_1/2)^{d_1}\leq|P_1(\xi,w)|\leq(2s_1)^{d_1} ~ \text{ for } ~ |\xi_1|=s_1, ~ \xi'\in\big(\Delta(0,s_2)\big)^{n-1}, ~ w\in\big(\Delta(0,s_2)\big)^n.
		\end{equation}
		Thus by \eqref{eqn: temp 5} and \eqref{eqn: temp 6},
		\begin{equation}\label{eqn: temp 7}
			|\varphi_1(\xi,w)|\leq\frac{M}{(s_1/2)^{d_1}},\quad\frac{1}{|\varphi_1(\xi,w)|}\leq\frac{(2s_1)^{d_1}}{m}
		\end{equation}
		for $|\xi_1|=s_1, ~ \xi'\in\big(\Delta(0,s_2)\big)^{n-1}, ~ w\in\big(\Delta(0,s_2)\big)^n.$ By the Maximum Principle, the inequalities in \eqref{eqn: temp 7} also hold for $|\xi_1|<s_1, \xi'\in\big(\Delta(0,s_2)\big)^{n-1}, w\in\big(\Delta(0,s_2)\big)^n.$ Take $r_1=\min\{s_1, s_2\}$, $c_1=\frac{M}{(s_1/2)^{d_1}}$ and $C_1=\frac{m}{(2s_1)^{d_1}}$. Then the decomposition $H=P_1\varphi_1$ is defined on $\big(\Delta(0,r_1)\big)^{n}\times\big(\Delta(0,r_1)\big)^{n}$ and
		\[
		c_1\leq|\varphi_1(\xi,w)|\leq C_1, \quad (\xi, w)\in\big(\Delta(0,r_1)\big)^n\times\big(\Delta(0,r_1)\big)^n.
		\]
		For $i=2,\ldots,n$ we can similarly construct decomposition $H=P_i\varphi_i$ on some $\big(\Delta(0,r_i)\big)^{n}\times\big(\Delta(0,r_i)\big)^{n}$, such that $P_i$ is a polynomial of degree $d_i$ in $\xi_i$, and $c_i\leq|\varphi_i|\leq C_i$ on $\big(\Delta(0,r_i)\big)^{n}\times\big(\Delta(0,r_i)\big)^{n}$. Finally, take $r=\min\{r_1,\ldots, r_n\}$, $c=\min\{c_1,\ldots, c_n\}$ and $C=\max\{C_1,\ldots,C_n\}$. This completes the proof.
	\end{proof}
	
	\begin{rem}
		Lemma \ref{lem: W decomp} is essentially a modification of the Weierstrass Preparation Theorem.
		In fact, the proof of the Weierstrass Preparation Theorem shows that the functions $P_i$ and $\varphi_i$ are holomorphic in $\xi$. However, that fact is not needed in this paper.
	\end{rem}
	
	\begin{lem}\label{lem: key inequality ln}
		Let $H(\xi,w)$ and $r, C, c$ be as in Lemma \ref{lem: W decomp}. Then for any $0<r_1,\ldots, r_n<r$ and any $\xi, w\in\big(\Delta(0,r)\big)^n$,
		\begin{equation*}
			\log|H(\xi,w)|\leq\frac{1}{|P_{r_1,\ldots,r_n}|}\int_{P_{r_1,\ldots,r_n}}\log|H(\eta,w)|\intd v_n(\eta)+\sum_{i=1}^nd_i\log\frac{r_i+|\xi_i|}{r_i}+\sum_{i=1}^n d_i+n\log\frac{C}{c}.
		\end{equation*}
	\end{lem}	
	
	\begin{proof}
		By Lemma \ref{lem: key inequality in dim 1 degree d} and Lemma \ref{lem: W decomp}, for $\xi, w\in\big(\Delta(0,r)\big)^n$ and $i=1,\ldots,n$,
		\begin{flalign*}
			&\log|H(\xi,w)|\\
			=&\log|P_i(\xi,w)|+\log|\varphi_i(\xi,w)|\\
			\leq&\bigg\{\frac{1}{|\Delta(0,r_i)|}\int_{\Delta(0,r_i)}\log|P_i(\xi_1,\ldots,\xi_{i-1},\eta_i,\xi_{i+1},\ldots,\xi_n, w)|\intd v_1(\eta_i)+d_i\log\frac{r_i+|\xi_i|}{r_i}+d_i\bigg\}+\log C\\
			\leq&\frac{1}{|\Delta(0,r_i)|}\int_{\Delta(0,r_i)}\log|H(\xi_1,\ldots,\xi_{i-1},\eta_i,\xi_{i+1},\ldots,\xi_n, w)|\intd v_1(\eta_i)+d_i\log\frac{r_i+|\xi_i|}{r_i}+d_i+\log\frac{C}{c}.
		\end{flalign*}
		Since $r_i<r$,  each $(\xi_1,\ldots,\xi_{i-1},\eta_i,\xi_{i+1},\ldots,\xi_n)$ in the integrand above still belong to $\big(\Delta(0,r)\big)^n$. This allows us to iterate the inequality over $i=1, 2,\ldots, n$. This gives
		\begin{flalign*}
			&\log|H(\xi,w)|\\
			\leq&\frac{1}{|\Delta(0,r_1)|}\int_{\Delta(0,r_1)}\log|H(\eta_1,\xi_2,\ldots,\xi_n,w)|\intd v_1(\eta_1)+d_1\log\frac{r_1+|\xi_1|}{r_1}+d_1+\log\frac{C}{c}\\
			\leq&\frac{1}{|\Delta(0,r_1)|}\int_{\Delta(0,r_1)}\bigg\{\frac{1}{|\Delta(0,r_2)|}\int_{\Delta(0,r_2)}\log|H(\eta_1,\eta_2,\xi_3,\ldots,\xi_n,w)|\intd v_1(\eta_2)\\
			&~~~~+d_2\log\frac{r_2+|\xi_2|}{r_2}+d_2+\log\frac{C}{c}\bigg\}\intd v_1(\eta_1)+d_1\log\frac{r_1+|\xi_1|}{r_1}+d_1+\log\frac{C}{c}\\
			=&\frac{1}{\pi^2r_1^2r_2^2}\int_{\Delta(0,r_1)\times\Delta(0,r_2)}\log|H(\eta_1,\eta_2,\xi_3,\ldots,\xi_n,w)|\intd v_2(\eta_1,\eta_2)\\
			&~~+d_2\log\frac{r_2+|\xi_2|}{r_2}+d_1\log\frac{r_1+|\xi_1|}{r_1}+d_2+d_1+2\log\frac{C}{c}\\
			&\ldots\\
			\leq&\frac{1}{\pi^nr_1^2\ldots r_n^2}\int_{P_{r_1,\ldots,r_n}}\log|H(\eta,w)|\intd v_n(\eta)+\sum_{i=1}^nd_i\log\frac{r_i+|\xi_i|}{r_i}+\sum_{i=1}^nd_i+n\log\frac{C}{c}\\
			=&\frac{1}{|P_{r_1,\ldots,r_n}|}\int_{P_{r_1,\ldots,r_n}}\log|H(\eta,w)|\intd v_n(\eta)+\sum_{i=1}^nd_i\log\frac{r_i+|\xi_i|}{r_i}+\sum_{i=1}^nd_i+n\log\frac{C}{c}.
		\end{flalign*}
		This completes the proof.
	\end{proof}

	\begin{lem}\label{lem: change basis}
		Suppose $h\in \Hol(U)$, where $U$ is an open neighborhood of $0\in\C^k$, $h$ is not identically zero. Then there exists an orthonormal basis $\{e_1,\ldots,e_k\}$ of $\C^k$, such that for every $i\in\{1,\cdots,k\}$, the one-variable function $h(\lambda e_i)$ is not identically zero.
	\end{lem}
	
	Lemma \ref{lem: change basis} can be implied by \cite[Lemma 2, Page 33]{chirka}. To avoid employing more terminologies we give a straightforward proof. We thank Dr. Hui Dan for suggesting this proof to us.
	
	\begin{proof}
		In the case $k=1$, the conclusion is obvious. In the case $k=2$, notice that $\langle(z_1,z_2),(\bar{z}_2,-\bar{z}_1)\rangle=0$ for all pairs $(z_1,z_2)$. Let
		$$
		f(z_1,z_2)=h(z_1,z_2)\overline{h(\bar{z}_2,-\bar{z}_1)}.
		$$
		The function $f$ is a holomorphic function defined in an open neighborhood of $0$ and $f$ is not identically zero. Pick any $(z_1,z_2)\neq0$ so that $f(z)\neq0$ and normalize $\{(z_1,z_2), (\bar{z}_2,-\bar{z}_1)\}$ into an orthonormal basis. This will satisfy our condition.
		
		We prove the general case by induction, suppose we have proved the result for $U\subseteq\C^{k-1}$. Now for $U\subset\C^k$, pick $z\neq0$ so that $h(z)\neq0$. Pick a two dimensional subspace $L\subset\C^k$ containing $z$, then $h|_{L\cap U}\nequiv0$. Since $\dim L=2$, by the previous argument we have orthonormal $v_1$ and $v_2\in L\cap U$ so that $h$ is not identically $0$ on $\C v_1\cap U$ and $\C v_2\cap U$. Now consider $L'=v_1^{\perp}$, since $v_2\in L'\cap U$, $h|_{L'\cap U}\nequiv0$. By induction, we have orthonormal $\{e_2,\ldots,e_k\}\subseteq L'$ such that $h$ is not identically $0$ on $\C e_i\cap U$, $i=2,\ldots,k$. The set $\{v_1, e_2,\ldots, e_k\}$ is the desired basis. This completes the proof.
	\end{proof}

	\begin{lem}\label{lem: key inequality local}
		Suppose $\Omega\subset\cn$ is a bounded strongly pseudoconvex domain with smooth boundary and $\rho$ is a defining function of $\Omega$. Suppose $h$ is a holomorphic function defined in an open neighborhood $U$ of $\overline{\Omega}$. Then for any $\zeta\in\partial\Omega$ there exist $C>0, N>0$, and an open neighborhood $V$ of $\zeta$, such that for any $z, w\in\Omega\cap V$ and $f\in\Hol(\Omega)$,
		\begin{equation}\label{eqn: key inequality local}
			|h(z)f(w)|\leq C\frac{F(z,w)^N}{|\rho(w)|^{N+n+1}}\int_{E(w,1)}|h(\lambda)f(\lambda)|\intd v_n(\lambda).
		\end{equation}
	\end{lem}
	
	\begin{proof}
		Without loss of generality, assume $\zeta=0$. Set
		\[
		e_{0,1}=\frac{\bpartial\rho(0)}{|\bpartial\rho(0)|},
		\]
		the unit normal vector at $0\in\partial\Omega$. Choose $e_{0,2}, \ldots, e_{0,n}$ so that 
		\[
		\bfe_0=\{e_{0,1}, e_{0,2},\ldots, e_{0,n}\}
		\]
		is an orthonormal basis of $\cn$.
			Recall that the complex tangent space at $0$ is
		\[
		T^{\C}_{0}(\partial\Omega)=\bigg\{\xi\in\cn: \sum_{j=1}^n\frac{\partial \rho(0)}{\partial z_j}\xi_j=0\bigg\}=\bigg(\C e_{0,1}\bigg)^\perp.
		\]
		Below let us work under the basis $\mathbf{e}_0$. Then
		\[
		T^{\C}_{0}(\partial\Omega)=\{\xi\in\cn: \xi_1=0\}.
		\]

		We split the proof into two cases.
		
		~
		
		\noindent\textbf{Case 1:} $h|_{\C e_{0,1}\cap U}$ is not identically zero.
		
		If $h|_{T^{\C}_0(\partial\Omega)\cap U}\equiv0$, then there exists a positive integer $d$ such that
		\[
		h(z)=z_1^dh_1(z),\quad\text{ where }h_1|_{\C e_{0,1}\cap U}\nequiv0,\quad h_1|_{T^{\C}_0(\partial\Omega)\cap U}\nequiv0.
		\]
		If $h|_{T^{\C}_0(\partial\Omega)\cap U}\nequiv 0$, then simply take $d=0$ and $h_1=h$. Thus in either case we have
		\begin{equation}\label{eqn: temp 8}
		h(z)=z_1^dh_1(z),\quad\text{ where }h_1|_{\C e_{0,1}\cap U}\nequiv0,\quad h_1|_{T^{\C}_0(\partial\Omega)\cap U}\nequiv0.
		\end{equation}
	By Lemma \ref{lem: W decomp}, we can modify our choice of $e_{0,2}, \ldots, e_{0,n}$ so that
	\begin{equation}\label{eqn: temp 9}
	h_1|_{\C e_{0,i}\cap U}\nequiv0, \quad \forall i=1,\ldots,n.
	\end{equation}
		For $w$ close to $0$, define
		\[
		e_{w,1}=\frac{\bpartial\rho(\pi(w))}{|\bpartial\rho(\pi(w))|},
		\]
		the unit normal vector at $w$.
		By Definition \ref{defn: pseudoconvex}, the vector $e_{w,1}$ varies smoothly with respect to $w$.
		Apply the Gram-Schmidt method to $\{e_{w,1}, e_{0,2}, \ldots, e_{0,n}\}$ to get an orthonormal basis $\bfe_w=\{e_{w,1}, e_{w,2},\ldots,e_{w,n}\}$.  Then the definition is consistent at $w=0$, and the vectors in $\bfe_w$ vary smoothly with respect to $w$ in a neighborhood of $0$. 
		Define the map
		\[
		U_w: \cn\to\cn, \quad\xi\mapsto w+\sum_{i=1}^n\xi_ie_{w,i}.
		\]
For $s, t>0$, denote
\[
P_w(s,t)=\{w+\sum_{i=1}^n\xi_i e_{w,i}: |\xi_1|<s, |\xi_j|<t, j=2, \ldots, n\}=U_w(P_{s,t,\ldots,t}).
\]
This is consistent with our notation in the preliminary.
By Lemma \ref{lem: Kball equiv to polydisk}, there exist $a, b>0$ such that for $w\in\Omega$ close enough to $0\in\partial\Omega$, 
\begin{equation*}
P_w(a\delta(w),b\delta(w)^{1/2})\subseteq E(w,1).
\end{equation*}
Let
		\[
		H(\xi, w)=h(U_w(\xi)),\quad H_1(\xi,w)=h_1(U_w(\xi)),
		\]
		and
		\[
		P(\xi,w)=\big[(U_w(\xi))_1\big]^d=\big[w_1+\sum_{i=1}^n\xi_ie_{w,j,1}\big]^d.
		\]
		Then the functions are defined in $\big(\Delta(0,R)\big)^n\times\big(\Delta(0,R)\big)^n$ for some $R>0$. By \eqref{eqn: temp 8}, $H=H_1P$. For each $w$, $P(\xi,w)$ is a polynomial of degree not exceeding $d$ in $\xi$. And
		$H_1$ is holomorphic in $\xi$. Also by \eqref{eqn: temp 9},
		\[
		H_1(\xi_i e_i,0)=h_1(\xi_i e_{w,i})\nequiv 0,\quad\forall i=1,\ldots,n.
		\]
		Let $d_i$ be the multiplicity of $H_1(\xi_i e_i,0)$ at $\xi_i=0$.
		By Lemma \ref{lem: W decomp} and Lemma \ref{lem: key inequality ln}, there exist $0<r<R$ and $C>0$ such that whenever $0<r_1, r_2,\ldots, r_n<r$, $\xi, w\in\big(\Delta(0,r)\big)^n$,
		\begin{equation*}
		\log|H_1(\xi,w)|\leq\frac{1}{|P_{r_1,\ldots,r_n}|}\int_{P_{r_1,\ldots,r_n}}\log|H_1(\eta,w)|\intd v_n(\eta)+\sum_{i=1}^nd_i\log\frac{r_i+|\xi_i|}{r_i}+\sum_{i=1}^nd_i+n\log\frac{C}{c}.
		\end{equation*}
		Also by Lemma \ref{lem: key inequality dim n degree d},
		\begin{equation*}
			\log|P(\xi,w)|\leq\frac{1}{|P_{r_1,\ldots,r_n}|}\int_{P_{r_1,\ldots,r_n}}\log|P(\eta,w)|\intd v_n(\eta)+d\sum_{i=1}^n\log\frac{r_i+|\xi_i|}{r_i}+nd.
		\end{equation*}
		Adding up the two inequalities above, and by $H=H_1P$, we get that for $0< r_1,\ldots,r_n<r$ and $\xi, w\in\big(\Delta(0,r)\big)^n$,
		\begin{equation*}
		\log|H(\xi,w)|\leq\frac{1}{|P_{r_1,\ldots,r_n}|}\int_{P_{r_1,\ldots,r_n}}\log|H(\eta,w)|\intd v_n(\eta)+\sum_{i=1}^n(d_i+d)\log\frac{r_i+|\xi_i|}{r_i}+C',
		\end{equation*}
		where $C'=\sum_{i=1}^nd_i+nd+n\log\frac{C}{c}$.
		Take
		\[
		r_1=a\delta(w), r_2=r_3=\ldots=r_n=b\delta(w)^{1/2}.
		\]
		For $w\in\big(\Delta(0,r)\big)^n$ close enough to $0\in\partial\Omega$ we have $r_i<r, i=1,\ldots,n$. Then the above becomes
		\begin{flalign}\label{eqn: temp 10}
		&\log|h(U_w(\xi))|
		=\log|H(\xi,w)|\nonumber\\
		\leq&\frac{1}{|P_{r_1,\ldots,r_n}|}\int_{P_{r_1,\ldots,r_n}}\log|H(\eta,w)|\intd v_n(\eta)+\sum_{i=1}^n(d_i+d)\log\frac{r_i+|\xi_i|}{r_i}+C'\nonumber\\
		=&\frac{1}{|P_{r_1,\ldots,r_n}|}\int_{P_{r_1,\ldots,r_n}}\log|h(U_w(\eta))|\intd v_n(\eta)+\sum_{i=1}^n(d_i+d)\log\frac{r_i+|\xi_i|}{r_i}+C'\\
		\xlongequal{\lambda=U_w(\eta)}&\frac{1}{|P_w(a\delta(w),b\delta(w)^{1/2})|}\int_{P_w(a\delta(w),b\delta(w)^{1/2})}\log|h(\lambda)|\intd v_n(\lambda)+\sum_{i=1}^n(d_i+d)\log\frac{r_i+|\xi_i|}{r_i}+C'.\nonumber
		\end{flalign}
		For $z$ close enough to $0$, there is $\xi\in\big(\Delta(0,r)\big)^n$ with $z=U_w(\xi)$,
		\begin{equation*}
		|\xi_1|=|\langle z-w, e_{w,1}\rangle|\approx|\langle z-w,\bpartial \rho(\pi(w))\rangle|,\quad |\xi_i|\leq|z-w|.
		\end{equation*}
	Thus by Lemma \ref{lem: X F approx},
	\begin{equation}\label{eqn: temp 12}
	\frac{r_1+|\xi_1|}{r_1}\lesssim\frac{a\delta(w)+|\langle z-w,\bpartial \rho(\pi(w))\rangle|}{a\delta(w)}\lesssim\frac{F(z,w)}{|\rho(w)|},
	\end{equation}
\begin{equation}\label{eqn: temp 13}
\frac{r_i+|\xi_i|}{r_i}\lesssim\frac{b\delta(w)^{1/2}+|z-w|}{b\delta(w)^{1/2}}\lesssim\bigg(\frac{F(z,w)}{|\rho(w)|}\bigg)^{1/2},\quad i=2,\ldots,n.
\end{equation}
Combining \eqref{eqn: temp 10}, \eqref{eqn: temp 12} and \eqref{eqn: temp 13} we see that for $z, w\in\Omega$ close to $0$,
\[
\log|h(z)|\leq\frac{1}{|P_w(a\delta(w),b\delta(w)^{1/2})|}\int_{P_w(a\delta(w),b\delta(w)^{1/2})}\log|h(\lambda)|\intd v_n(\lambda)+N\log\frac{F(z,w)}{|\rho(w)|}+C'',
\]
where $N=d_1+d+\frac{\sum_{i=2}^n(d_i+d)}{2}$, and $C''\in\mathbb{R}$ is some other constant.	

Suppose $f\in\Hol(\Omega)$, then $\log|f|$ is plurisubharmonic. So
\[
\log|f(w)|\leq\frac{1}{|P_w(a\delta(w),b\delta(w)^{1/2})|}\int_{P_w(a_1\delta(w),b_1\delta(w)^{1/2})}\log|f(\lambda)|\intd v_n(\lambda).
\]
Adding up the two inequalities above, we get
\begin{flalign*}
\log|f(w)h(z)|\leq&\frac{1}{|P_w(a\delta(w),b\delta(w)^{1/2})|}\int_{P_w(a\delta(w),b\delta(w)^{1/2})}\log|f(\lambda)h(\lambda)|\intd v_n(\lambda)\\
&~~~+N\log\frac{F(z,w)}{|\rho(w)|}+C''.
\end{flalign*}
Finally, by the Jensen's inequality,
\begin{flalign*}
|f(w)h(z)|\leq&\exp\bigg\{\frac{1}{|P_w(a\delta(w),b\delta(w)^{1/2})|}\int_{P_w(a\delta(w),b\delta(w)^{1/2})}\log|f(\lambda)h(\lambda)|\intd v_n(\lambda)\\
&~~~~~~~~~~~+N\log\frac{F(z,w)}{|\rho(w)|}+C''\bigg\}\\
\approx&\bigg(\frac{F(z,w)}{|\rho(w)|}\bigg)^N\exp\bigg\{\frac{1}{|P_w(a\delta(w),b\delta(w)^{1/2})|}\int_{P_w(a\delta(w),b\delta(w)^{1/2})}\log|f(\lambda)h(\lambda)|\intd v_n(\lambda)\bigg\}\\
\leq&\bigg(\frac{F(z,w)}{|\rho(w)|}\bigg)^N\frac{1}{|P_w(a\delta(w),b\delta(w)^{1/2})|}\int_{P_w(a\delta(w),b\delta(w)^{1/2})}|f(\lambda)h(\lambda)|\intd v_n(\lambda)\\
\lesssim&\frac{F(z,w)^N}{|\rho(w)|^{N+n+1}}\int_{P_w(a\delta(w),b\delta(w)^{1/2})}|f(\lambda)h(\lambda)|\intd v_n(\lambda)\\
\leq&\frac{F(z,w)^N}{|\rho(w)|^{N+n+1}}\int_{E(w,1)}|f(\lambda)h(\lambda)|\intd v_n(\lambda).
\end{flalign*}
This proves \eqref{eqn: key inequality local} at case 1.

		~
		
\noindent\textbf{Case 2:} $h|_{\C e_{\zeta,1}\cap U}\equiv0$.
		
We {\bf claim }that, there exists a biholomorphic map $\Phi$ defined in an open neighborhood of $\overline{\Omega}$, such that $h\circ\Phi^{-1}$ is not identically zero along the complex normal direction of $\Phi(\Omega)$ at the point $\Phi(\zeta)$.

To prove the claim, take a small open ball $B$ in $\Omega$ that is tangent to $\partial\Omega$ at $\zeta$. Let us temporarily drop the assumption that $\zeta=0$. Without loss of generality, assume instead that $B=\bn$, the open unit ball of $\cn$, and $\zeta=(1,0,\ldots,0)$. Clearly, $h|_{\bn}\nequiv0$. Choose $z\in\bn$ such that $h(z)\neq0$. Let $\varphi_z$ be the M\"{o}bius transform
\begin{equation}\label{eqn: Mobius}
\varphi_z(w)=\frac{z-P_z(w)-(1-|z|^2)^{1/2}Q_z(w)}{1-\langle w, z\rangle},
\end{equation}
where $P_z, Q_z$ are orthogonal projections from $\cn$ onto $\C z$ and $\big(\C z\big)^\perp$, respectively. 
It is well-known that $\varphi_z$ is biholomorphic on $\{w: \langle w, z\rangle\neq1\}$, $\varphi_z^{-1}=\varphi_z$, and $\varphi_z$ is an automorphism of $\bn$. If we choose $z$ close enough to $0$ then $\varphi_z$ is defined in an open neighborhood of $\overline{\Omega}$. Since $\partial\Omega$ and $\partial\bn$ are tangent at $\zeta$, the complex normal directions of $\partial\big(\varphi_z\Omega\big)$ and $\partial\big(\varphi_z\bn\big)=\partial\bn$ coincide at $\varphi_z(\zeta)$. The complex normal direction of $\partial\bn$ at $\varphi_z(\zeta)$ is represented by $\C \varphi_z(\zeta)$. By our construction, $h\circ\varphi_z^{-1}(0)=h(z)\neq0$. Therefore
\[
h\circ\varphi_z^{-1}|_{\C \varphi_z(\zeta)}\nequiv0.
\]
In other words, if we take $\Phi=\varphi_z$ then $h\circ\Phi^{-1}$ is not identically zero in the complex normal direction at $\Phi(\zeta)$. This proves the claim.

Let $\Phi$ be as in the claim. Then $\Phi(\Omega)$ is a bounded strongly pseudoconvex domain with smooth boundary and $\rho\circ\Phi^{-1}$ is a defining function for $\Phi(\Omega)$. By our claim and case 1, there exist $N>0$, $C>0$ and an open neighborhood $U$ of $\Phi(\zeta)$ such that for $\xi, \omega\in U$ and $g\in\Hol(\Phi(\Omega))$,
\[
|h\circ\Phi^{-1}(\xi)g(\omega)|\leq C\frac{F'(\xi,\omega)^N}{|\rho\circ\Phi^{-1}(\omega)|^{N+n+1}}\int_{E'(\omega,1)}|h\circ\Phi^{-1}(\eta)g(\eta)|\intd v_n(\eta).
\]
Here $F'(\xi, \omega)$ defined as in Definition \ref{defn: X F}, for $\Phi(\Omega)$, and $E'(\omega,1)$ is the unit Kobayashi ball for $\Phi(\Omega)$ centered at $\omega$. Since the Kobayashi distance is invariant under a biholomorphic map (\cite{krantz}), we have
\[
\Phi^{-1}E'(\omega,1)=E(\Phi^{-1}(\omega),1),\quad\forall\omega\in\Phi(\Omega).
\]

 Since $\Phi$ is biholomorphic, $V=\Phi^{-1}(U)$ is an open neighborhood of $\zeta$. For $z, w\in V$ there are $\xi, \omega\in U$ such that $\xi=\Phi(z), \omega=\Phi(w)$. For $f\in\Hol(\Omega)$, $g=f\circ\Phi^{-1}\in\Hol(\Phi(\Omega))$. Therefore, we have
\begin{flalign*}
|h(z)f(w)|=&|h\circ\Phi^{-1}(\xi)g(\omega)|\\
\leq&C\frac{F'(\xi,\omega)^N}{|\rho\circ\Phi^{-1}(\omega)|^{N+n+1}}\int_{E'(\omega,1)}|h\circ\Phi^{-1}(\eta)g(\eta)|\intd v_n(\eta)\\
\xlongequal{\lambda=\Phi^{-1}(\eta)}&C\frac{F'(\Phi(z),\Phi(w))^N}{|\rho(w)|^{N+n+1}}\int_{E(w,1)}|h(\lambda)f(\lambda)|J(\lambda)\intd v_n(\lambda)\\
\lesssim&\frac{F(z,w)^N}{|\rho(w)|^{N+n+1}}\int_{E(w,1)}|h(\lambda)f(\lambda)|\intd v_n(\lambda).
\end{flalign*}
Here $J(\lambda)$ is the real Jacobian of $\Phi$. The last inequality comes from Lemma \ref{lem: biholomorphic} and the fact that $J(\lambda)$ is bounded on $\overline{\Omega}$. This completes the proof.
	\end{proof}

\begin{proof}[{\bf Proof of Lemma \ref{lem: key inequality Kdelta}}]
Since $\partial\Omega$ is compact, by Lemma \ref{lem: key inequality local}, there is a finite open cover $\{V_i\}_{i=1}^k$ of $\partial\Omega$, and constants $C_i, N_i>0, i=1, \ldots, k$ such that for any $f\in\Hol(\Omega)$, $ i=1,\ldots,k,$ and $z, w\in V_i\cap\Omega,$
\[
|h(z)f(w)|\leq C_i\frac{F(z,w)^{N_i}}{|\rho(w)|^{N_i+n+1}}\int_{E(w,1)}|h(\lambda)f(\lambda)|\intd v_n(\lambda).
\]
By Definition \ref{defn: X F}, $\frac{F(z,w)}{|\rho(w)|}\geq1$. Take $N=\max\{N_1,\ldots, N_k\}$. Then the above inequality implies
\[
|h(z)f(w)|\lesssim\frac{F(z,w)^{N}}{|\rho(w)|^{N+n+1}}\int_{E(w,1)}|h(\lambda)f(\lambda)|\intd v_n(\lambda)
\]
whenever $(z,w)\in\big(\cup_iV_i\times V_i\big)\cap\big(\Omega\times\Omega\big)$.
It is easy to see that $R_\delta\subset\cup_iV_i\times V_i$ for some $\delta>0$. So the inequality holds for $R_\delta\cap\big(\Omega\times\Omega\big)\subset\big(\cup_iV_i\times V_i\big)\cap\big(\Omega\times\Omega\big)$. This completes the proof.
\end{proof}

We are ready to prove the key inequality.
\begin{proof}[{\bf Proof of Theorem \ref{thm: key inequality}}]
Let $\delta, N, C$ be as in Lemma \ref{lem: key inequality Kdelta}. It suffices to prove \eqref{eqn: key inequality} for $(z,w)\notin R_\delta$. We first prove
\begin{equation}\label{eqn: temp 15}
|f(w)|\lesssim\frac{1}{|\rho(w)|^{N+n+1}}\int_{E(w,1)}|h(\lambda)f(\lambda)|\intd v_n(\lambda),\quad\forall w\in\Omega.
\end{equation}
Denote $\bn(z,r)$ the Euclidean open ball in $\cn$ with center $z$ and radius $r$.
Take a finite set 
\[
\{z_1, z_2,\ldots, z_k\}\subset\Omega_{\delta/4}
\]
such that
\[
h(z_i)\neq0,\quad \forall i=1, 2, \ldots, k,
\]
and
\[
\bigcup_{i=1}^k\bn(z_i,\frac{\delta}{3})\supset\Omega_{\delta/4}.
\]
Then for any $w\in\Omega_{\delta/4}$, there is some $z_i$ such that $|z_i-w|<\delta/3$. So
\[
\delta(z_i)+\delta(w)+|z-w|<\delta/4+\delta/4 +\delta/3<\delta,
\]
i.e., $(z_i, w)\in R_\delta$. Thus by Lemma \ref{lem: key inequality Kdelta},
\[
|h(z_i)f(w)|\leq C\frac{F(z_i,w)^N}{|\rho(w)|^{N+n+1}}\int_{E(w,1)}|h(\lambda)f(\lambda)|\intd v_n(\lambda).
\]
Denote
\[
\delta'=\min\{|\rho(z_j)|: j=1,\ldots,k\},\quad m=\min\{|h(z_j)|: j=1,\ldots,k\}.
\]
Then $\delta'>0, m>0$ by construction.
By Definition \ref{defn: X F},
\[
F(z_i, w)\geq\delta'.
\]
So we have
\begin{flalign*}
|f(w)|\leq&\frac{|h(z_i)f(w)|}{m}\\
\leq&\frac{1}{m} C\frac{F(z_i,w)^N}{|\rho(w)|^{N+n+1}}\int_{E(w,1)}|h(\lambda)f(\lambda)|\intd v_n(\lambda)\\
\leq&\frac{\delta'^{N}}{m} \frac{1}{|\rho(w)|^{N+n+1}}\int_{E(w,1)}|h(\lambda)f(\lambda)|\intd v_n(\lambda).
\end{flalign*}
This proves \eqref{eqn: temp 15} when $w\in\Omega_{\delta/4}$.
On the other hand, there is $\epsilon>0$ such that
\[
|\rho(w)|>\epsilon,\quad E(w,1)\supset \bn(w,\epsilon),\quad\forall w\in\Omega\backslash\Omega_{\delta/4}.
\]
By \cite[Proposition 2.1.10]{krantz}, the subharmonic function $\log|h|$ is locally integrable, and therefore the function $w\mapsto\frac{1}{|\bn(w,\epsilon)|}\int_{\bn(w,\epsilon)}\log|h(\lambda)|\intd v_n(\lambda)$ is continuous. This proves that there is some $c\in\mathbb{R}$ such that
\begin{equation*}
c\leq\frac{1}{|\bn(w,\epsilon)|}\int_{\bn(w,\epsilon)}\log|h(\lambda)|\intd v_n(\lambda),\quad \forall w\in\Omega\backslash\Omega_{\delta/4}.
\end{equation*}
Also,
\begin{equation*}
\log|f(w)|\leq\frac{1}{|\bn(w,\epsilon)|}\int_{\bn(w,\epsilon)}\log|f(\lambda)|\intd v_n(\lambda).
\end{equation*}
Adding up the two inequalities above we get
\begin{equation*}
c+\log|f(w)|\leq\frac{1}{|\bn(w,\epsilon)|}\int_{\bn(w,\epsilon)}\log|f(\lambda)h(\lambda)|\intd v_n(\lambda),\quad \forall w\in\Omega\backslash\Omega_{\delta/4}.
\end{equation*}
By Jensen's inequality,
\begin{flalign*}
e^c|f(w)|\leq&\exp\bigg\{\frac{1}{|\bn(w,\epsilon)|}\int_{\bn(w,\epsilon)}\log|f(\lambda)h(\lambda)|\intd v_n(\lambda)\bigg\}\\
\leq&\frac{1}{|\bn(w,\epsilon)|}\int_{\bn(w,\epsilon)}|f(\lambda)h(\lambda)|\intd v_n(\lambda)\\
\lesssim&\int_{\bn(w,\epsilon)}|f(\lambda)h(\lambda)|\intd v_n(\lambda)\\
\leq&\int_{E(w,1)}|f(\lambda)h(\lambda)|\intd v_n(\lambda)\\
\lesssim&\frac{1}{|\rho(w)|^{N+n+1}}\int_{E(w,1)}|f(\lambda)h(\lambda)|\intd v_n(\lambda),\quad \forall w\in\Omega\backslash\Omega_{\delta/4}.
\end{flalign*}
This proves \eqref{eqn: temp 15} for $w\in\Omega\backslash\Omega_{\delta/4}$, and hence \eqref{eqn: temp 15} holds for any $w\in\Omega$.
If $(z,w)\notin R_\delta$, then $F(z,w)$ is bounded away from $0$. Also, $|h|$ has an upper bounded. So
\begin{flalign*}
|h(z)f(w)|\lesssim& |f(w)|\\
\lesssim&\frac{1}{|\rho(w)|^{N+n+1}}\int_{E(w,1)}|h(\lambda)f(\lambda)|\intd v_n(\lambda)\\
\lesssim&\frac{F(z,w)^N}{|\rho(w)|^{N+n+1}}\int_{E(w,1)}|h(\lambda)f(\lambda)|\intd v_n(\lambda).
\end{flalign*}
Combing Lemma \ref{lem: key inequality Kdelta} we see that the inequality above holds for any $(z,w)\in\Omega\times\Omega$. This completes the proof.
\end{proof}
	
	\section{Main Result}\label{sec: main}
	In this section, we prove the following theorem, which is the main result of this paper.
	\begin{thm}\label{thm: main}
		Suppose $\Omega\subseteq\cn$ is a bounded strongly pseudoconvex domain with smooth boundary and $h\in \Hol(\overline{\Omega})$. Then the principal submodule of the Bergman module $L_a^2(\Omega)$ generated by $h$ is $p$-essentially normal for all $p>n$.
	\end{thm}
The main ingredient of the proof of Theorem \ref{thm: main} is Theorem \ref{thm: key inequality}. Besides that, some other preparations are needed.

For a measurable function $G$ on $\Omega\times\Omega$, formally define the integral operator
\begin{equation}\label{eqn: A_G}
A_Gf(z)=\int_{\Omega}\frac{G(z,w)}{F(z,w)^{n+1}}f(w)\intd v_n(w).
\end{equation}

\begin{lem}\label{lem: integral bounded}
Suppose $G(z,w)\in L^\infty(\Omega\times\Omega)$. Then $A_G$ defines a bounded operator on $L^2(\Omega)$.
\end{lem}

\begin{proof}
By Lemma \ref{lem: Rudin Forelli},
\[
\int_{\Omega}\bigg|\frac{G(z,w)}{F(z,w)^{n+1}}\bigg||\rho(w)|^{-1/2}\intd v_n(w)\lesssim\int_\Omega\frac{|\rho(w)|^{-1/2}}{F(z,w)^{n+1}}\intd v_n(w)\lesssim |\rho(z)|^{-1/2},
\]
and
\[
\int_\Omega\bigg|\frac{G(z,w)}{F(z,w)^{n+1}}\bigg||\rho(z)|^{-1/2}\intd v_n(z)\lesssim\int_\Omega\frac{|\rho(z)|^{-1/2}}{F(z,w)^{n+1}}\intd v_n(z)\lesssim |\rho(w)|^{-1/2}.
\]
By Schur's test, $A_G$ is bounded. 
\end{proof}

As in \cite[Lemma 5.1]{paper1}, by Lemma \ref{lem: integral bounded} and interpolation, we have the following Schatten-class membership criterion.
	\begin{lem}\label{lem: Schatten criterion}
		Suppose $2\leq p<\infty$ and $G(z,w)$ is a measurable function in $\Omega\times \Omega$. Let $A_G$ be as in \eqref{eqn: A_G}.
		If
		$$
		\int_{\Omega}\int_{\Omega}\frac{|G(z,w)|^p}{F(z,w)^{2(n+1)}}dv_n(z)dv_n(w)<\infty,
		$$
		then the operator $A_G$ is in the Schatten-$p$ class $\mathcal{S}^p$.
	\end{lem}

An immediate consequence of Lemma \ref{lem: Schatten criterion} is the following.
\begin{lem}\label{lem: Mrho Schatten}
Suppose $c>0$. Denote $P$ the orthogonal projection from $L^2(\Omega)$ onto $L_a^2(\Omega)$. Then for $ p>\frac{n}{c}, p\geq2$ it hods that
\[
 M_{|\rho|^c}P\in\mathcal{S}^p.
\]
\end{lem}
\begin{proof}
For any $f\in L^2(\Omega)$,
\begin{equation*}
M_{|\rho|^c}Pf(z)=\int_{\Omega}|\rho(z)|^cK(z,w)f(w)\intd v_n(w)=\int_{\Omega}\frac{|\rho(z)|^cK(z,w)F(z,w)^{n+1}}{F(z,w)^{n+1}}f(w)\intd v_n(w).
\end{equation*}
For $p>\frac{n}{c}$, $p\geq2$, by \eqref{eqn: Kzw lesssim Fzw} and Lemma \ref{lem: Rudin Forelli},
\begin{flalign*}
&\int_{\Omega}\int_{\Omega}\frac{\big||\rho(z)|^cK(z,w)F(z,w)^{n+1}\big|^p}{F(z,w)^{2(n+1)}}\intd v_n(z)\intd v_n(w)\\
\lesssim&\int_{\Omega}\int_{\Omega}\frac{|\rho(z)|^{cp}}{F(z,w)^{2(n+1)}}\intd v_n(z)\intd v_n(w)\\
\lesssim&\int_{\Omega}|\rho(w)|^{-(n+1-cp)}\intd v_n(w).
\end{flalign*}
Since $-(n+1-cp)>-n-1+n=-1$, by Lemma \ref{lem: Rudin Forelli} again, the integral above is finite. Therefore by Lemma \ref{lem: Schatten criterion}, $M_{|\rho|^c}P\in\mathcal{S}^p$. This completes the proof.
\end{proof}

For $l\geq0$, let $G_l(z,w)$ be as in Lemma \ref{lem: weighted Bergman kernel}. For $i=1,\ldots, n$, formally define the integral operator,
\begin{equation}\label{eqn: Gil defn}
G_i^{(l)}f(z)=\int_\Omega\bar{w}_if(w)G_l(z,w)|\rho(w)|^l\intd v_n(w).
\end{equation}

\begin{lem}\label{lem: Gi bounded}
For $l\geq0, i=1,\ldots,n$, $G_i^{(l)}$ extends to a bounded operator from $L^2(\Omega)$ to $L_a^2(\Omega)$. 
\end{lem}

\begin{proof}
	By Definition \ref{defn: X F} and \eqref{eqn: Glzw lesssim Fzw},
	\[
	\big|\bar{w}_iG_l(z,w)\big||\rho(w)|^l\lesssim \frac{|\rho(w)|^l}{F(z,w)^{n+1+l}}\leq\frac{1}{F(z,w)^{n+1}}.
	\]
	By Lemma \ref{lem: integral bounded}, $G_i^{(l)}$ is bounded on $L^2(\Omega)$. 
Since $G_l(z,w)$ is holomorphic in $z$, for every $f\in L^2(\Omega)$, its image $G_i^{(l)}f$ is a holomorphic function. Therefore $G_i^{(l)}$ has range in $ L_a^2(\Omega)$. This completes the proof.
\end{proof}

	\begin{proof}[\textbf{Proof of Theorem \ref{thm: main}}]
	Denote $[h]$ the principal submodule of $L_a^2(\Omega)$ generated by $h$. 
	\[
	[h]=\overline{\{hf: f\in L_a^2(\Omega)\}}\subset L^2(\Omega).
	\]
	In this proof we consider $[h]$ as a Hilbert subspace of $L^2(\Omega)$. All operators on $[h]$ can be identified with its extension to $L^2(\Omega)$ that take zero on $L^2(\Omega)\ominus[h]$. 
	Denote $P_h$ the orthogonal projection from $L^2(\Omega)$ onto $[h]$. For $i=1,\ldots, n$, the module action on $[h]$ is defined by the operators
	\[
	R_i=P_hM_{z_i}P_h,\quad i=1, 2,\ldots, n,
	\]
	where $M_{z_i}$ is the multiplication operator on $L^2(\Omega)$. Since
	\begin{flalign}\label{eqn: RiRj}
	[R_i, R_j^*]=&P_hM_{z_i}P_hM_{\bar{z}_j}P_h-P_hM_{\bar{z}_j}P_hM_{z_i}P_h\nonumber\\
	=&P_hM_{\bar{z}_j}(I-P_h)M_{z_i}P_h-P_hM_{z_i}(I-P_h)M_{\bar{z}_j}P_h\\
	=&-\bigg[(I-P_h)M_{\bar{z}_i}P_h\bigg]^*\bigg[(I-P_h)M_{\bar{z}_j}P_h\bigg],\nonumber
	\end{flalign}
it suffices to prove that for any $i=1, 2, \ldots, n$,
\begin{equation}\label{eqn: temp goal}
(I-P_h)M_{\bar{z}_i}P_h\in\mathcal{S}^p,\quad\forall p>2n.
\end{equation}
Let $N>0$ be as in Theorem \ref{thm: key inequality}. Take a positive integer $l>N+1$. For $i=1,\ldots,n$, let $G_i^{(l)}$ be as in \eqref{eqn: Gil defn}. Define
\begin{equation}
X_i(hf)=M_{\bar{z}_i}(hf)-hG_i^{(l)}(f),\quad\forall f\in L_a^2(\Omega).
\end{equation}
By Lemma \ref{lem: Gi bounded}, $G_i^{(l)}(f)\in L_a^2(\Omega)$. So $hG_i^{(l)}(f)\in[h]$. Therefore
\begin{equation}\label{eqn: temp}
(I-P_h)M_{\bar{z}_i}(hf)=(I-P_h)\bigg(M_{\bar{z}_i}(hf)-hG_i^{(l)}(f)\bigg)=(I-P_h)X_i(hf).
\end{equation}

For any $f\in L_a^2(\Omega), z\in\Omega$, since $f\in L_a^2(\Omega)\subset L_{a,l}^2(\Omega)$,
\begin{flalign*}
	X_i(hf)(z)=&\bar{z}_ih(z)\int_{\Omega}f(w)G_l(z,w)|\rho(w)|^l\intd v_n(w)-h(z)\int_{\Omega}\bar{w}_if(w)G_l(z,w)|\rho(w)|^l\intd v_n(w)\\
	=&\int_{\Omega}\overline{(z_i-w_i)}h(z)f(w)G_l(z,w)|\rho(w)|^l\intd v_n(w).
\end{flalign*}
By Definition \ref{defn: X F},
\[
|z-w|\leq F(z,w)^{1/2},\quad \forall z, w\in\Omega.
\]
By \eqref{eqn: Glzw lesssim Fzw},
\[
|G_l(z,w)|\lesssim F(z,w)^{-n-1-l},\quad \forall z, w\in\Omega.
\]
So by Theorem \ref{thm: key inequality} and the above,
\begin{flalign*}
	\big|X_i(hf)(z)\big|\lesssim&\int_{\Omega}|z-w|~|h(z)f(w)|~|G_l(z,w)|~ |\rho(w)|^l\intd v_n(w)\\
	\lesssim&\int_{\Omega}|h(z)f(w)|\frac{|\rho(w)|^l}{F(z,w)^{n+1/2+l}}\intd v_n(w)\\
	\lesssim&\int_{\Omega}\bigg(\frac{F(z,w)^N}{|\rho(w)|^{N+n+1}}\int_{E(w,1)}|h(\lambda)f(\lambda)|\intd v_n(\lambda)\bigg)\frac{|\rho(w)|^l}{F(z,w)^{n+1/2+l}}\intd v_n(w)\\
	=&\int_{\Omega}\int_{E(w,1)}|h(\lambda)f(\lambda)|\frac{|\rho(w)|^{l-N-(n+1)}}{F(z,w)^{n+1/2+(l-N)}}\intd v_n(\lambda)\intd v_n(w)\\
	=&\int_{\Omega}\bigg(\int_{E(\lambda,1)}\frac{|\rho(w)|^{l-N-(n+1)}}{F(z,w)^{n+1/2+(l-N)}}\intd v_n(w)\bigg)|h(\lambda)f(\lambda)|\intd v_n(\lambda).
\end{flalign*}
By Lemma \ref{lem: Kball equiv to polydisk}, Lemma \ref{lem: rzrw beta leq r} and Lemma \ref{lem: Fzw beta leq r}, we have
\begin{flalign*}
	&\int_{E(\lambda,1)}\frac{|\rho(w)|^{l-N-(n+1)}}{F(z,w)^{n+1/2+(l-N)}}\intd v_n(w)
	\approx\int_{E(\lambda,1)}\frac{|\rho(\lambda)|^{l-N-(n+1)}}{F(z,\lambda)^{n+1/2+(l-N)}}\intd v_n(w)\\
	=&\frac{|\rho(\lambda)|^{l-N-(n+1)}}{F(z,\lambda)^{n+1/2+(l-N)}}|E(\lambda,1)|
	\approx\frac{|\rho(\lambda)|^{l-N}}{F(z,\lambda)^{n+1/2+(l-N)}}.
\end{flalign*}
Combining the above shows that
\begin{equation}\label{eqn: Xi estimate}
	|X_i(hf)(z)|\lesssim\int_{\Omega}|h(\lambda)f(\lambda)|\frac{|\rho(\lambda)|^a}{F(z,\lambda)^{n+\frac{1}{2}+a}}\intd v_n(\lambda),
\end{equation}
where $a=l-N>1$. Define $Y_i$ on the linear subspace $L=\{|\rho|^{1/2}hf: f\in L_a^2(\Omega)\}\subset L^2(\Omega)$,
\[
Y_i(|\rho|^{1/2}hf)=X_i(hf).
\]
Then by \eqref{eqn: Xi estimate}, for $f\in L_a^2(\Omega)$,
\begin{flalign*}
|Y_i(|\rho|^{1/2}hf)(z)|\lesssim&\int_{\Omega}\big||\rho(\lambda)|^{1/2}h(\lambda)f(\lambda)\big|\frac{|\rho(\lambda)|^{a-1/2}}{F(z,\lambda)^{n+\frac{1}{2}+a}}\intd v_n(\lambda)\\
\lesssim&\int_{\Omega}\big|\big(|\rho|^{1/2}hf\big)(\lambda)\big|\frac{1}{F(z,\lambda)^{n+1}}\intd v_n(\lambda).
\end{flalign*}
By Lemma \ref{lem: integral bounded}, $Y_i$ extends to a bounded operator on $L^2(\Omega)$. By \eqref{eqn: temp},
\[
(I-P_h)M_{\bar{z}_i}P_h(hf)=(I-P_h)X_i(hf)=(I-P_h)Y_iM_{|\rho|^{1/2}}PP_h(hf).
\]
Since $\{hf: f\in L_a^2(\Omega)\}$ is dense in $[h]$, we have
\[
(I-P_h)M_{\bar{z}_i}P_h=(I-P_h)Y_iM_{|\rho|^{1/2}}PP_h.
\]
By Lemma \ref{lem: Mrho Schatten}, $M_{|\rho|^{1/2}}P\in\mathcal{S}^p, \forall p>2n$. So $(I-P_h)M_{\bar{z}_i}P_h\in\mathcal{S}^p, \forall p>2n$, for any $i=1,\ldots, n$. By \eqref{eqn: RiRj},
\[
[R_i, R_j^*]\in\mathcal{S}^p,\quad \forall p>n.
\]
This completes the proof.
\end{proof}

\section{The Geometric Arveson-Douglas Conjecture}

Suppose $V$ is a complex variety in $\Omega$. Define
\[
\PP_V=\{f\in L_a^2(\Omega): f|_V=0\}.
\]
Since evaluations are bounded on $L_a^2(\Omega)$, $\PP_V$ is a closed subspace of $L_a^2(\Omega)$. Clearly $\PP_V$ is closed under each $M_{z_i}$. So $\PP_V$ is a submodule of $L_a^2(\Omega)$.

In \cite{KS2012}, Kennedy and Shalit introduced a geometric version of the Arveson-Douglas Conjecture, which states that if $\Omega$ is $\bn$ and $V$ is homogeneous, then the quotient module $\mathcal{Q}_V:=\PP_V^\perp$ is $p$-essentially normal for any $p>\dim_{\C}V$. It quickly drew attentions of many researchers, see for example, \cite{DTY} and \cite{ee}. This geometric conjecture naturally extends to the setting of strongly pseudoconvex domains. As a byproduct of our Theorem \ref{thm: main} we give some result on the Geometric Arveson-Douglas Conjecture.

Let us begin with a few definitions. See \cite{chirka} for more details.

\begin{defn}
	Let $\Omega$ be a complex manifold. A set $A\subseteq\Omega$ is called a \emph{(complex) analytic subset} of $\Omega$ if for each point $a\in\Omega$ there are a neighborhood $U$ of $a$ and functions $f_1,\ldots,f_N$ holomorphic in this neighborhood such that
	$$
	A\cap U=\{z\in U:~f_1(z)=\cdots=f_N(z)=0\}.
	$$
	
	In particular, $A$ is said to be \emph{principal} if there is a holomorphic function $f$ on $\Omega$, not identically vanishing on any component of $\Omega$, such that $A=Z(f):=\{z\in\Omega:~f(z)=0\}$. The function $f$ is called a defining function of $A$ (not to be confused with the defining function of a pseudoconvex domain).
\end{defn}

For an analytic subset $A$ of $\Omega$, a point $a\in A$ is called a \emph{regular point} if there is a neighborhood $U$ of $a$ in $\Omega$ such that $A\cap U$ is a complex submanifold of $\Omega$. Otherwise $a$ is called a \emph{singular point}. The complex dimension of $A$ at a regular point $z$, denoted by $\dim_zA$, is simply the dimension of the complex manifold $A\cap U$, where $U$ is a sufficiently small neighborhood of $z$. The set of regular points is dense in $A$ (\cite{chirka}) and this leads to a definition of dimension at any point.

\begin{defn}
	Let $A$ be an analytic subset of $\Omega$. The \emph{dimension} of $A$ at an arbitrary point $a\in A$ is the number
	$$
	\dim_a A:=\limsup_{z\to a, z\text{ regular}}\dim_z A.
	$$
	The dimension of $A$ is, by definition, the maximum of its dimensions at all points:
	$$
	\dim A:=\max_{z\in A}\dim_z A.
	$$
	
	$A$ is said to be \emph{pure} if its dimensions at all points coincide.
\end{defn}

Pure analytic subsets of codimension $1$ has some very important properties.

\begin{lem}{\cite[Corollary 1, Page 26]{chirka}}\label{locally principal}
	Every pure $(n-1)$ dimensional analytic subset on an $n$-dimensional complex manifold is locally principal, i.e., for any $a\in A$ there exist open neighborhood $U$ of $a$ in $\Omega$ and holomorphic function $f$ on $U$ such that $A\cap U=\{z\in U:~f(z)=0\}$.
\end{lem}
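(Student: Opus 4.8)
Since the statement is local, the plan is to fix $a\in A$, use local coordinates to identify a neighborhood of $a$ in $\Omega$ with a neighborhood of $0$ in $\cn$, and translate everything into an assertion about the local ring $\mathcal{O}_{n,a}$ of germs of holomorphic functions at $a$. The classical inputs I would rely on are that $\mathcal{O}_{n,a}\cong\C\{z_1,\dots,z_n\}$ is a Noetherian unique factorization domain (proved by induction on $n$ from the Weierstrass preparation and division theorems together with Gauss's lemma) and that it is a regular local ring of Krull dimension $n$.

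First I would decompose the germ of $A$ at $a$ into its finitely many irreducible components $A_1,\dots,A_k$ (finiteness being the Noetherian property of $\mathcal{O}_{n,a}$), and let $\mathfrak{p}_j=I(A_j)\subseteq\mathcal{O}_{n,a}$ be the prime ideal of germs vanishing on $A_j$. The purity hypothesis, together with the density in $A_j$ of those regular points that avoid the other components, forces $\dim_a A_j=n-1$ for every $j$. By the local parametrization theorem this geometric dimension equals the Krull dimension of $\mathcal{O}_{n,a}/\mathfrak{p}_j$; since a regular local ring is catenary and equidimensional one has $\mathrm{ht}\,\mathfrak{p}_j+\dim(\mathcal{O}_{n,a}/\mathfrak{p}_j)=n$, hence $\mathrm{ht}\,\mathfrak{p}_j=1$.

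Now I would invoke the UFD property: in a unique factorization domain every height-one prime is principal, so $\mathfrak{p}_j=(f_j)$ with $f_j$ an irreducible germ, and the $f_j$ are pairwise non-associate because the components are distinct. Taking $f:=f_1\cdots f_k$ and a representative holomorphic on a small neighborhood $U$ of $a$, the analytic Nullstellensatz gives $Z(f_j)=V(\mathfrak{p}_j)=A_j$ near $a$, whence
\[
A\cap U=\bigcup_{j=1}^k A_j=\bigcup_{j=1}^k Z(f_j)=Z(f).
\]
One may equally package the last two steps as $I(A)=\bigcap_j(f_j)=(f_1\cdots f_k)$, the latter equality because in a UFD the intersection of the principal ideals generated by pairwise non-associate irreducibles is generated by their product.

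The main obstacle is not the ring-theoretic manipulation but the two ``dictionary'' facts that feed into it: that $\mathcal{O}_{n,a}$ is a Noetherian UFD, and that the geometric dimension $\dim_a A_j$ coincides with the algebraic Krull dimension of $\mathcal{O}_{n,a}/I(A_j)$. Both rest on the Weierstrass preparation/division theorems and the local parametrization theorem for analytic sets; granting them, the argument above is essentially immediate. (In the paper nothing of this needs to be reproduced, since the assertion is exactly \cite[Corollary~1, Page~26]{chirka}.)
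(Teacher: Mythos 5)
The paper does not prove this lemma; it cites it verbatim from Chirka, so there is no in-paper argument to compare against. Your blind proof is correct: the decomposition of the germ of $A$ at $a$ into finitely many irreducible components, the identification of $\dim_a A_j$ with the Krull dimension of $\mathcal{O}_{n,a}/\mathfrak{p}_j$, the dimension formula $\mathrm{ht}\,\mathfrak{p}_j+\dim(\mathcal{O}_{n,a}/\mathfrak{p}_j)=n$ (valid because $\mathcal{O}_{n,a}$ is a regular, hence Cohen--Macaulay, local ring of dimension $n$), the fact that height-one primes in a UFD are principal, and the final assembly $I(A)=\bigcap_j(f_j)=(\prod_j f_j)$ are all sound. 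The route in Chirka's book is more geometric-analytic: for each irreducible branch $A_j$ one applies the local parametrization theorem to produce a finite branched cover $\pi\colon A_j\to\C^{n-1}$, and then exhibits $A_j$ explicitly as the zero set of the Weierstrass polynomial $W_j(z',z_n)=\prod_{\nu}\bigl(z_n-\alpha_\nu(z')\bigr)$ built from the fibers of $\pi$; no Krull dimension or UFD machinery is invoked. Both arguments ultimately rest on the Weierstrass preparation and division theorems (which underlie both the local parametrization theorem and the proof that $\mathcal{O}_{n,a}$ is a Noetherian UFD), so they are cognate in substance, but yours compresses the analytic input into standard commutative-algebra facts while Chirka's keeps the projection and the explicit defining polynomial in view; the former is more economical for a reader fluent in local algebra, the latter hands you the defining function directly.
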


Let $A$ be a principal analytic subset of $\Omega$, i.e., $A=\{z\in\Omega:~f(z)=0\}$ for a ceratin holomorphic function $f$. The function $f$ is called a \emph{minimal defining function} of $A$ if for every open set $U\subseteq\Omega$ and every $g\in \Hol(U)$ such that $g|_{A\cap U}=0$, there exists an $h\in \Hol(U)$ such that $g=fh$ in $U$.

\begin{lem}{\cite[Proposition 1, Page 27]{chirka}}\label{minimal defn func}
	Every pure $(n-1)$-dimensional analytic subset on an $n$-dimensional complex manifold locally has a minimal defining function.
\end{lem}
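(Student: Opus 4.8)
The plan is to take the minimal defining function to be the ``reduction'' of an arbitrary local defining function, and to verify minimality via the R\"uckert Nullstellensatz together with the factoriality of the local rings of germs and a coherence argument; all of the background facts invoked below can be found in \cite{chirka}.

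Fix $a\in A$. By Lemma \ref{locally principal} there is an open neighborhood $W_0$ of $a$ in $\Omega$ and $f_0\in Hol(W_0)$ with $A\cap W_0=Z(f_0)$. Since the local ring $\mathcal{O}_a$ of germs of holomorphic functions at $a$ is a unique factorization domain, I would write the germ $(f_0)_a=u\,p_1^{m_1}\cdots p_r^{m_r}$ with $u$ a unit and $p_1,\ldots,p_r$ pairwise non-associate irreducible germs, and put $f:=p_1\cdots p_r$, realized as a holomorphic function on a small enough open neighborhood $W\subseteq W_0$ of $a$ on which, in addition, $Z(f)=Z(f_0)=A$ holds (the germ $Z((f_0)_a)$ is $\bigcup_j Z(p_j)$, so representatives agree after shrinking). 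The claim is then that $f$ is a minimal defining function of $A$ on $W$.

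The heart of the matter is the assertion that $I(A)_b=(f)_b$ in $\mathcal{O}_b$ for \emph{every} $b\in W$, where $I(A)_b$ denotes the ideal of germs at $b$ of holomorphic functions vanishing on $A$. At $b=a$ this is exactly the R\"uckert Nullstellensatz combined with factoriality: $I(A)_a=\sqrt{(f_0)_a}=\sqrt{(p_1\cdots p_r)_a}=(p_1\cdots p_r)_a=(f)_a$, since an ideal generated by a square-free element of a UFD equals its own radical. To propagate this from the single point $a$ to a whole neighborhood I would invoke coherence of the ideal sheaf $\mathcal{I}_A$: a coherent sheaf whose stalk at $a$ is generated by a single germ is generated by a single section, namely $f$, at every point of some neighborhood of $a$, so $I(A)_b=(f)_b$ on a possibly smaller $W$. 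An alternative, more hands-on route is to show directly that the non-reduced locus of $f$ --- the set of $b$ at which $(f)_b$ fails to be square-free --- is an analytic subset of $W$ that does not pass through $a$, hence is empty after shrinking $W$; this analyticity-and-avoidance statement is the delicate point, and I expect it to be where the main work lies.

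Granting $I(A)_b=(f)_b$ for all $b\in W$, minimality is routine. Let $U\subseteq W$ be open and $g\in Hol(U)$ with $g|_{A\cap U}=0$. For $b\in U\setminus A$ the germ $f_b$ is a unit, so $g_b=f_b h_b$ with $h_b:=g_b/f_b\in\mathcal{O}_b$; for $b\in A\cap U$ we have $g_b\in I(A)_b=(f)_b$, so again $g_b=f_b h_b$ for some $h_b\in\mathcal{O}_b$. Since $f$ vanishes on no nonempty open subset of $W$, $f_b$ is not a zero divisor in $\mathcal{O}_b$, so each $h_b$ is unique; hence the germs $h_b$ patch to a single $h\in Hol(U)$ with $g=fh$ on $U$. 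Thus $f$ is a minimal defining function of $A$ on $W$, which is what we wanted.
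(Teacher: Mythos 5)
The paper does not prove this lemma; it is quoted from Chirka \cite[Proposition 1, Page 27]{chirka}, so there is no in-paper argument to compare against. That said, your proof is correct, and it follows the standard route one would expect: reduce a local defining function to its square-free part $f=p_1\cdots p_r$ in the UFD $\mathcal{O}_a$, identify $I(A)_a=(f)_a$ via the R\"uckert Nullstellensatz, propagate this equality of stalks to a neighborhood of $a$, and then divide any $g$ vanishing on $A$ by $f$ germ-by-germ, patching via the uniqueness of quotients in the integral domains $\mathcal{O}_b$. The crux, as you correctly flag, is propagating $I(A)_b=(f)_b$ from the single point $a$ to a neighborhood. Your primary argument (coherence of $\mathcal{I}_A$ together with the fact that sections generating a coherent stalk generate all nearby stalks) does settle this cleanly. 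The ``more hands-on'' alternative you sketch is also not as delicate as you fear, and is closer in spirit to the elementary treatments in Chirka's book: since $f_a$ is square-free, no irreducible component of $Z(f)$ at $a$ lies in $S:=\{f=0,\ df=0\}$, so $\dim_a S\le n-2$ and hence $\dim_b S\le n-2$ for $b$ near $a$; but if some $f_b$ had a repeated irreducible factor $q$, then $Z(q)\subseteq S$ near $b$ would be $(n-1)$-dimensional, a contradiction. This avoids invoking Oka's coherence theorem as a black box, which is the main difference between the two variants; the conclusion and the core reduction step (UFD plus Nullstellensatz at the distinguished point) are the same. One small point worth making explicit in a final write-up: after shrinking $W$ you should record that $f$ actually vanishes on $A\cap W$ (so that $f$ is indeed a section of $\mathcal{I}_A$ over $W$), which holds because $Z(f)$ and $Z(f_0)$ have the same germ at $a$.
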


Now suppose $\Omega$ is a bounded strongly pseudoconvex domain with smooth boundary, $\rho$ is a defining function and $V$ is a pure $(n-1)$-dimensional analytic subset of an open neighborhood of $\overline{\Omega}$. For $\epsilon>0$, denote
\[
\Omega^\epsilon=\{z\in\cn: \rho(z)<\epsilon\}.
\] 
Choose $\epsilon$ sufficiently small so that $\Omega^{\epsilon}$ is also a bounded strongly pseudoconvex domain with smooth boundary, and $V$ is an analytic subset of $\Omega^{\epsilon}$. Since the gradient of $\rho$ is non-vanishing on $\partial\Omega$, we can take $\epsilon$ small enough so that $\overline{\Omega^{\epsilon/2}}$ is a compact subset of $\Omega^{\epsilon}$. By Lemma \ref{minimal defn func}, there is a finite open cover $\{U_i\}$ of $\overline{\Omega^{\epsilon/2}}\subset\Omega^{\epsilon}$ and a minimal defining function $h_i$ on each $U_i$. By definition, if $U_i\cap U_j\neq\emptyset$, the function $g_{ij}=h_i/h_j$ is holomorphic and non-vanishing on $U_i\cap U_j$. They satisfy
$$
g_{ij}\cdot g_{ji}=1 \text{ on } U_i\cap U_j,
$$
and
$$
g_{ij}\cdot g_{jk}\cdot g_{ki}=1 \text{ on }U_i\cap U_j\cap U_k.
$$
Such a set of functions is called a second Cousin data. The second Cousin problem asks for non-vanishing holomorphic functions $\{g_i: U_i\to\C\}$, such that $g_{ij}=g_i/g_j$ on each $U_i\cap U_j$. In the famous paper \cite{Oka}, Oka proved that the second Cousin problem on a domain of holomorphy can be solved by holomorphic functions if it can be solved by continuous functions. More generally, the following holds.

\begin{lem}[\cite{krantz} Corollary 6.5.15]\label{lem: second cousin}
Let $\Omega\subset\cn$ be a domain. If $H^1(\Omega,\mathscr{O})=H^2(\Omega,\mathbb{Z})=0$ then the second Cousin problem is always solvable on $\Omega$.
\end{lem}
If $\Omega$ is a bounded strongly pseudoconvex domain with smooth boundary then $H^1(\Omega,\mathscr{O})=0$. Therefore the second Cousin problem is solvable on a bounded strongly pseudoconvex domain $\Omega$ with smooth boundary satisfying the topological condition $H^2(\Omega,\mathbb{Z})=0$.

\begin{lem}\label{lem: P_V for contractible}
Suppose $\Omega\subset\cn$ is a bounded strongly pseudoconvex domain with smooth boundary and $H^2(\Omega^\epsilon,\mathbb{Z})=0$ for sufficiently small $\epsilon>0$. Suppose $V$ is a complex analytic subset of an open neighborhood of $\overline{\Omega}$ of pure dimension $n-1$. Then there is a holomorphic function $h$ defined in an open neighborhood of $\overline{\Omega}$ such that
\[
\PP_V=\{hf: f\in\Hol(\Omega), hf\in L_a^2(\Omega)\}.
\]
\end{lem}

\begin{proof}
By Lemma \ref{lem: second cousin} and the analysis above it, there exist non-vanishing holomorphic functions $g_i\in \Hol(U_i\cap \Omega^\epsilon)$ such that $g_{ij}=g_i/g_j$. Thus $h_i/h_j=g_i/g_j, \forall i, j.$ If we define $h=h_i/g_i$ on $U_i\cap\Omega^\epsilon$, then $h$ is well defined and becomes a global minimal defining function for $V$ in $\Omega^{\epsilon/2}$.
Suppose $f\in \PP_V$. Then $f=gh$ for some $g\in \Hol(\Omega)$. Thus
\[
\PP_V\subset\{gh: g\in\Hol(\Omega), gh\in L_a^2(\Omega)\}.
\]
The other side of inclusion is trivial by the definition of $\PP_V$. This completes the proof.
\end{proof}

Let $h$ be as in Lemma \ref{lem: P_V for contractible}. Then clearly
\[
[h]\subset \PP_V=\{hf: f\in\Hol(\Omega), hf\in L_a^2(\Omega)\}.
\]
If one can show that $[h]=\PP_V$, then Theorem \ref{thm: main} actually implies the $p$-essential normality of $\PP_V$ (and hence $\PP_V^\perp$) for $p>n$. In the following subsection, we show that this is indeed the case when $\Omega=\bn$.

\subsection{Characterization of $[h]$ on the Unit Ball}

As explained above, the main goal of this subsection is to prove the following.
\begin{thm}\label{thm: codim 1 on bn}
	Suppose $V$ is a pure $(n-1)$-dimensional analytic subset of an open neighborhood of $\clb$, then $V$ has a minimal defining function $h$ on an open neighborhood of $\clb$. Moreover,
	\begin{equation}\label{eqn: P_V=P_h=set}
	[h]=\PP_V=\{hf: f\in\Hol(\bn), hf\in L_a^2(\bn)\}.
	\end{equation}
In particular, the submodule $\PP_V$ and the quotient module $\PP_V^\perp$ are $p$-essentially normal for all $p>n$.
\end{thm}

In the case $\Omega=\bn$, one can take the defining function $\rho(z)=|z|^2-1$. Moreover,
\[
F(z,w)\approx|1-\langle z, w\rangle|,\quad\forall z, w\in\bn.
\]
The Kobayashi metric on $\bn$ is exactly the Bergman metric, given by
\[
\beta(z,w)=\tanh^{-1}|\varphi_z(w)|,
\]
where $\varphi_z(w)$ is the M\"{o}bius transform, defined as in \eqref{eqn: Mobius}. 

By the above, when $\Omega=\bn$, Theorem \ref{thm: key inequality} translates into the following.

\begin{thm}\label{thm: key inequality on bn}
Suppose $h$ is a holomorphic function defined on a neighborhood of the closed unit ball $\overline{\bn}$. Then there exists a constant $N$ such that for any function $f\in \Hol(\bn)$ and any $z,w\in\bn$,
$$
|h(z)f(w)|\lesssim \frac{|1-\langle w,z\rangle|^N}{(1-|w|^2)^{N+n+1}}\int_{E(w,1)}|h(\lambda)f(\lambda)|dv_n(\lambda),
$$
where $E(w,1)$ is the unit ball centered at $w$ with radius $1$ in the Bergman metric.
\end{thm}

Also recall inequality \eqref{eqn: temp 15} in the proof of Theorem \ref{thm: key inequality}. For $\Omega=\bn$, it translates into
\begin{equation}\label{eqn: temp 15 bn}
|f(w)|\lesssim\frac{1}{(1-|w|^2)^{N+n+1}}\int_{E(w,1)}|f(\lambda)h(\lambda)|\intd v_n(\lambda),\quad\forall w\in\bn.
\end{equation}

\begin{defn}\label{defn: Lamuh}
Let $h$ be a holomorphic function in an open neighborhood of $\clb$. Take the measure $d\mu_h=|h|^2dv_n$. Let $L^2(\mu_h)$ be the space of functions that are square integrable under this measure. Let $L_a^2(\mu_h)$ be the weighted Bergman space consisting of holomorphic functions in $L^2(\mu_h)$, that is,
\[
L_a^2(\mu_h)=\big\{f\in\Hol(\bn): \int_{\bn}|f|^2\intd\mu_h<\infty\big\}.
\]
\end{defn}
	
	\begin{lem}\label{mu_h complete}
		The weighted Bergman space $L_a^2(\mu_h)$ is a complete reproducing kernel Hilbert space.
	\end{lem}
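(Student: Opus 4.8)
The plan is to verify the two defining properties of a complete RKHS — locally uniformly bounded point evaluations and completeness — letting Theorem~\ref{key corollary on bn} do the essential work. Throughout I assume $h\not\equiv 0$ (otherwise $\mu_h$ is the zero measure and there is nothing to prove). Under this assumption the form $\langle f,g\rangle_{\mu_h}=\int_{\bn}f\bar g\,|h|^2\,dv_n$ is genuinely positive definite on $Hol(\bn)$: if $\|f\|_{\mu_h}=0$ then $hf=0$ $v_n$-a.e., hence $hf\equiv 0$ by holomorphy, hence $f\equiv 0$. So $L_a^2(\mu_h)$ is at least a pre-Hilbert space of holomorphic functions, and $\|f\|_{\mu_h}=\|hf\|_{L_a^2(\bn)}$.

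First I would establish bounded point evaluations. Fix once and for all a point $z^*\in\bn$ with $h(z^*)\neq 0$, which exists since $h\not\equiv 0$. For $f\in L_a^2(\mu_h)$ and $z_0\in\bn$, apply Theorem~\ref{key corollary on bn} with $z=z^*$, $w=z_0$:
\[
|h(z^*)f(z_0)|\lesssim\frac{|1-\langle z_0,z^*\rangle|^N}{(1-|z_0|^2)^{N+n+1}}\int_{E(z_0,1)}|h(\lambda)f(\lambda)|\,dv_n(\lambda).
\]
By the Cauchy--Schwarz inequality the integral is at most $\|hf\|_{L_a^2(\bn)}\,v_n(E(z_0,1))^{1/2}\le\|f\|_{\mu_h}\,v_n(\bn)^{1/2}$, so
\[
|f(z_0)|\lesssim\frac{1}{|h(z^*)|}\cdot\frac{|1-\langle z_0,z^*\rangle|^N}{(1-|z_0|^2)^{N+n+1}}\,\|f\|_{\mu_h}.
\]
The crucial feature is that the estimate evaluates $h$ at the fixed point $z^*$, not at $z_0$, so it remains meaningful even when $z_0$ lies on the zero set of $h$. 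For $z_0$ ranging over a compact set $K\subseteq\bn$ the quantity $1-|z_0|^2$ is bounded below and $|1-\langle z_0,z^*\rangle|$ is bounded above, so there is a constant $C_K$ with $\sup_{z_0\in K}|f(z_0)|\le C_K\|f\|_{\mu_h}$; in particular every point evaluation on $L_a^2(\mu_h)$ is a bounded linear functional.

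Completeness follows by the standard normal-families argument. Let $\{f_k\}$ be Cauchy in $L_a^2(\mu_h)$; it is Cauchy in the Hilbert space $L^2(\mu_h)$ and converges there to some $g$. By the previous paragraph $\sup_K|f_k-f_j|\le C_K\|f_k-f_j\|_{\mu_h}$ for every compact $K$, so $\{f_k\}$ converges locally uniformly on $\bn$ to a holomorphic function $f$. Passing to a subsequence we also have $f_k\to g$ $\mu_h$-a.e., hence $f=g$ $\mu_h$-a.e., so $g$ has a holomorphic representative in $L_a^2(\mu_h)$ and $f_k\to f$ in $\|\cdot\|_{\mu_h}$. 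Thus $L_a^2(\mu_h)$ is a Hilbert space, and since every point evaluation is bounded, Riesz representation yields, for each $z\in\bn$, a kernel $K^{\mu_h}_z\in L_a^2(\mu_h)$ with $f(z)=\langle f,K^{\mu_h}_z\rangle_{\mu_h}$; this completes the proof.

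The only mildly delicate step is the first one, and specifically the realization that Theorem~\ref{key corollary on bn} allows one to decouple the point at which $h$ is evaluated from the point at which $f$ is evaluated — that is exactly what makes point evaluation controllable on the zero set of $h$, where a naive ``divide by $h$'' argument breaks down. Everything after that is routine.
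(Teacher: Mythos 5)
Your proof is correct, and it takes a genuinely different route from the paper's for the bounded-point-evaluation step. The paper's argument is self-contained and elementary: when $h(z)\neq 0$ it divides by $h(z)$ and uses the standard $L_a^2(\bn)$ evaluation bound; when $h(z)=0$ it picks a complex line through $z$ on which $h\not\equiv 0$, surrounds $z$ with a small circle $C_r$ avoiding $Z(h)$, observes that evaluations on $C_r$ are uniformly bounded (again by dividing by $h$), and invokes the maximum principle to control $|f(z)|$. You instead apply Theorem~\ref{key corollary on bn} with the roles $z=z^*$ (a fixed point off $Z(h)$) and $w=z_0$, exploiting precisely the asymmetry of that inequality: $h$ is evaluated where it does not vanish, $f$ where you want the bound, and the right-hand side is controlled by $\|hf\|_{L^2}=\|f\|_{\mu_h}$ via Cauchy--Schwarz. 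Your route gives a single uniform estimate with an explicit polynomial growth rate in $(1-|z_0|^2)^{-1}$ and avoids the case split, but it leans on the heavy analytic estimate of Section~3, whereas the paper's proof of this particular lemma deliberately stays elementary and independent of that machinery. Both are valid; your version is shorter and arguably slicker given that the key estimate is already available at this point in the paper, while the paper's version has the virtue of requiring nothing beyond one-variable complex analysis. The completeness half of your argument matches the paper's.
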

	
	\begin{proof}
		By \eqref{eqn: temp 15 bn}, for any $f\in L_a^2(\mu_h)$ and $w\in\bn$,
		\begin{flalign*}
		|f(w)|\lesssim&(1-|w|^2)^{-(N+n+1)}\int_{E(w,1)}|fh|\intd v_n\\
		\leq&(1-|w|^2)^{-(N+n+1)}\int_{\bn}|fh|\intd v_n\\
		\lesssim&(1-|w|^2)^{-(N+n+1)}\bigg(\int_{\bn}|fh|^2\intd v_n\bigg)^{1/2}\\
		=&(1-|w|^2)^{-(N+n+1)}\|f\|_{L_a^2(\mu_h)}.
		\end{flalign*}
		Thus the evaluations functionals are bounded on $L_a^2(\mu_h)$. Given a compact subset $K\subset\bn$, the above also shows that the evaluation functionals at points in $K$ are uniformly bounded.
		
		Next we show that $L_a^2(\mu_h)$ is complete, or equivalently, $L_a^2(\mu_h)$ is closed in $L^2(\mu_h)$. Suppose $\{f_n\}\subseteq L_a^2(\mu_h)$ and $f_n$ converge to $f\in L^2(\mu_h)$. By the above, $f_n$ converge to $f$ uniformly on any compact subset of $\bn$.
	This shows that $f$ is holomorphic. Therefore $f\in L_a^2(\mu_h)$. So $L_a^2(\mu_h)$ is complete. This completes the proof.
	\end{proof}
	
	For $f\in\Hol(\bn)$ and $0<r<1$, write
	\[
	f_r(z)=f(rz), \forall z\in\cn, |z|<\frac{1}{r}.
	\]
	\begin{lem}\label{lem: poly dense in Lamuh}
		Suppose $h$ is a holomorphic function defined in an open neighborhood of $\clb$. Then there exists $C>0$ such that for any $f\in L_a^2(\mu_h)$ and $\frac{1}{2}<r<1$,
		\[
		\|f_r\|_{L_a^2(\mu_h)}\leq C\|f\|_{L_a^2(\mu_h)}.
		\]
		As a consequence, the set of holomorphic functions defined in a neighborhood of $\clb$ is dense in $L_a^2(\mu_h)$.
	\end{lem}
	
	\begin{proof}
		Applying Theorem \ref{thm: key inequality on bn} for $w=rz$, we get
		\begin{flalign*}
			|h(z)f(rz)|\lesssim&\frac{(1-r|z|^2)^N}{(1-r^2|z|^2)^{N+n+1}}\int_{E(rz,1)}|h(\lambda)f(\lambda)|\intd v_n(\lambda)\\
			\leq&\frac{1}{(1-r^2|z|^2)^{n+1}}\int_{E(rz,1)}|h(\lambda)f(\lambda)|\intd v_n(\lambda).
		\end{flalign*}
		Therefore
		\begin{flalign*}
			\|f_r\|^2_{L_a^2(\mu_h)}=&\int_{\bn}|h(z)f_r(z)|^2\intd v_n(z)\\
			\lesssim&\int_{\bn}\frac{1}{(1-r^2|z|^2)^{2(n+1)}}\bigg|\int_{E(rz,1)}|h(\lambda)f(\lambda)|\intd v_n(\lambda)\bigg|^2\intd v_n(z)\\
			\leq&\int_{\bn}\bigg(\frac{1}{(1-r^2|z|^2)^{2(n+1)}}v_n(E(rz,1))\cdot\int_{E(rz,1)}|h(\lambda)f(\lambda)|^2\intd v_n(\lambda)\bigg)\intd v_n(z)\\
			\lesssim&\int_{\bn}\bigg(\frac{1}{(1-r^2|z|^2)^{n+1}}\int_{E(rz,1)}|h(\lambda)f(\lambda)|^2\intd v_n(\lambda)\bigg)\intd v_n(z).
		\end{flalign*}
		By Fubini's Theorem, the last integral is equal to
		\begin{flalign*}
			&\int_{\bn}\bigg(\int_{\{z:~rz\in E(\lambda,1)\}}\frac{1}{(1-r^2|z|^2)^{n+1}}\intd v_n(z)\bigg)|h(\lambda)f(\lambda)|^2\intd v_n(\lambda)\\
			=&\int_{\bn}\bigg(\int_{E(\lambda,1)}\frac{1}{(1-|\eta|^2)^{n+1}}\frac{1}{r^{2n}}\intd v_n(\eta)\bigg)|h(\lambda)f(\lambda)|^2\intd v_n(\lambda)\\
			\lesssim&\int_{\bn}|h(\lambda)f(\lambda)|^2\intd v_n(\lambda)\\
			=&\|f\|^2_{L_a^2(\mu_h)}.
		\end{flalign*}
		Here we used the fact that $v_n(E(w,1))\approx(1-|w|^2)^{n+1}$, and $1-|\eta|^2\approx1-|\lambda|^2$ whenever $\eta\in E(\lambda,1)$ (cf. \cite{kehezhu}).
		
		We have proved the inequality. It remains to show that functions defined in a neighborhood of $\clb$ are dense. For any $f\in L_a^2(\mu_h)$, let $f_m:=f_{1-\frac{1}{m+1}}$. Then the sequence of functions $\{f_m\}$ are defined in a neighborhood of $\clb$. By the previous argument, this is a bounded sequence in $L_a^2(\mu_h)$. Therefore there exists a subsequence that converges weakly. Since $f_m\to f$ pointwise, the weak limit must be $f$. Thus $f$ lies in the weak closure of the subspace of function defined in a neighborhood of $\clb$. By the Hahn-Banach Theorem, $f$ also belongs to the norm closure. This completes the proof.
	\end{proof}

	\begin{proof}[{\bf Proof of Theorem \ref{thm: codim 1 on bn}}]
		Let $V$ be as in Theorem \ref{thm: codim 1 on bn}. Since $\bn$ is contractible, clearly $H^2(\bn,\mathbb{Z})=0$. Thus by Lemma \ref{lem: P_V for contractible}, there is a holomorphic function $h$ in an open neighborhood of $\clb$ such that
		\begin{equation}\label{eqn: temp P_V set}
		\PP_V=\big\{hf: f\in\Hol(\bn), hf\in L_a^2(\bn)\big\}=\big\{hf: f\in L_a^2(\mu_h)\big\}.
		\end{equation}
		The last equality follows easily from Definition \ref{defn: Lamuh}.
		Define the operator
		$$
		\mathcal{I}:L_a^2(\mu_h)\to L_a^2(\bn),\quad f\mapsto fh.
		$$
		Then $\mathcal{I}$ is an isometric isomorphism, and by the above, $\mathrm{Ran}(\mathcal{I})=\PP_V$. By Lemma \ref{lem: poly dense in Lamuh}, the subspace $\Hol(\clb)$ consisting of
		 functions that are holomorphic in a neighborhood of $\clb$ are dense in $L_a^2(\mu_h)$. Therefore 
		 \begin{equation}\label{eqn: P_V=P_h}
		 \PP_V=\mathrm{Ran}(\mathcal{I})=\mathcal{I}\big(L_a^2(\mu_h)\big)=\mathcal{I}\bigg(\overline{\Hol(\clb)}\bigg)=\overline{\mathcal{I}(\Hol(\clb))}=[h].
		 \end{equation}
	 Combining \eqref{eqn: temp P_V set} and \eqref{eqn: P_V=P_h} gives \eqref{eqn: P_V=P_h=set}. Finally, by Theorem \ref{thm: main}, the submodule $\PP_V=[h]$ is $p$-essentially normal for all $p>n$. It is well-known that for $p>n$, the $p$-essential normality of a submodule in $L_a^2(\bn)$ is equivalent to the corresponding quotient module. Thus $\PP_V^\perp=L_a^2(\bn)\ominus \PP_V$ is also $p$-essentially normal for $p>n$.
		This completes the proof.
	\end{proof}
	
	We end this paper with several remarks.
	\begin{rem}
	In Lemma \ref{lem: P_V for contractible}, we showed that if $\Omega$ satisfies the topological condition $H^2(\Omega,\mathbb{Z})=0$ then for a pure codimension $1$ variety $V$ in a neighborhood of $\overline{\Omega}$,
	\[
	\PP_V=\{hf: f\in\Hol(\Omega), hf\in L_a^2(\Omega)\}
	\]
	for some generating function $h$. The gap between $\PP_V$ and $[h]$ is then filled for $\Omega=\bn$. The technical difficulty of extending this result to any bounded strongly pseudoconvex domain $\Omega$ with smooth boundary, satisfying $H^2(\Omega,\mathbb{Z})=0$, lies in Lemma \ref{lem: poly dense in Lamuh}. We believe that this difficulty can be overcome by a detailed estimate (perhaps using the Narasimhan lemma \cite{krantz}).
	\end{rem}

	\begin{rem}
	Proving equalities like $\PP_V=[h]$ in Theorem \ref{thm: codim 1 on bn} allows us to study the Geometric Arveson-Douglas Conjecture by studying submodules. There is, however, a drawback in this approach: for $n\geq p>\dim_{\C}V$, the $p$-essential normality of a quotient module is no longer equivalent to that of its corresponding submodule. In fact a general submodule is not expected to be $p$-essentially normal for $p\leq n$. Nonetheless, the essential normality (the cross commutator being compact) of a quotient module in $L_a^2(\Omega)$ has already some important applications. For example, by the BDF-theory, an essentially normal quotient module $\mathcal{Q}$ defines an element in a K-homology $K_1(X)$, where $X$ a topology space depending on $\mathcal{Q}$. For nice varieties $V$, the $X$ space for $\mathcal{Q}_V=\PP_V^\perp$ is $V\cap\partial\Omega$. As observed by the first author in \cite{index}, this gives a new kind of index theory.
\end{rem}
	
	\noindent\textbf{Acknowledgement} The second author is partially supported by National Natural Science Foundation of China.
	The third author is partially supported by National Science Foundation(DMS-2101370).
	
	~ 
	
	A first draft of this paper had been completed in 2017. To our sadness, the first author, Ronald G. Douglas passed away in 2018. The second and third authors would like to write a few words in memory of the first author. Ronald G. Douglas was a master in the area of operator theory and operator algebra. Many of his work, including the BDF-theory, the Cowen-Douglas operator theory, the theory of Douglas algebras, and the theory of Hilbert Modules, have inspired generations of researchers. The conjecture studied in this paper also bears his name. The third author had the opportunity of working with him from 2014 to 2018. Discussing with Ron had always been enjoyable experiences. He sometimes talks like a philosopher. Some of his words remain influential today. Perhaps that is why it still feels unreal, even after four years.

		\bibliographystyle{plain}
	\bibliography{reference}

~

~

	\noindent Ronald G. Douglas, Department of Mathematics, Texas A\&M University, College Station, Texas, 77843, USA, E-mail: rdouglas@math.tamu.edu
	
	~
	
	\noindent Kunyu Guo, School of Mathematical Sciences, Fudan University, Shanghai, 200433, China, E-mail: kyguo@fudan.edu.cn
	
	~
	
	\noindent Yi Wang, Department of Mathematics, Vanderbilt University, Nashville, Tennessee, 37203, USA, E-mail: yiwangfdu@gmail.com
\end{document}